\def\url@smallstyle{%
  \@ifundefined{selectfont}{\def\UrlFont{\sf}}{\def\UrlFont{\small\ttfamily}}}
\numberwithin{equation}{section}
\theoremstyle{plain}
    \newtheorem{theorem}[equation]{Theorem}
    \newtheorem{lemma}[equation]{Lemma}
    \newtheorem{corollary}[equation]{Corollary}
    \newtheorem{proposition}[equation]{Proposition}
    \newtheorem*{theorem*}{Theorem}
    \newtheorem*{proposition*}{Proposition}
    \newtheorem*{corollary*}{Corollary}
    \newtheorem*{lemma*}{Lemma}
    \newtheorem*{conjecture*}{Conjecture}
    \newtheorem{definition-theorem}[equation]{Definition/Theorem}
    \newtheorem{definition-lemma}[equation]{Definition/Lemma}
\theoremstyle{definition}
    \newtheorem{definition}[equation]{Definition}
    \newtheorem{examples}[equation]{Examples}
    \newtheorem{remark}[equation]{Remark}
    \newcommand{\C}{\mathbb{C}}
    \newcommand{\Z}{\mathbb{Z}}
    \renewcommand{\H}{{\mathcal H}}
   	\renewcommand{\phi}{\varphi}
    \renewcommand{\epsilon}{\varepsilon}
 \newcommand{\into}{\hookrightarrow}
\newcommand{\dd}{d} 
\newcommand{\restrict}{\big{\vert}}
\newcommand{\Centre}{\mathfrak Z}
\newcommand{\conductor}{\mathfrak c}
\newcommand{\argument}{\hspace{2pt}\underbar{\phantom{g}}\hspace{2pt}}
\DeclareMathOperator{\Hom}{Hom}
\DeclareMathOperator{\End}{End}
\DeclareMathOperator{\Trace}{Trace}
\DeclareMathOperator{\Ad}{Ad}
\newcommand{\SL}{\operatorname{SL}}
\newcommand{\GL}{\operatorname{GL}}
\DeclareMathOperator{\id}{id}
\DeclareMathOperator{\vol}{vol}
\DeclareMathOperator{\infl}{infl}
\DeclareMathOperator{\image}{image}
\DeclareMathOperator{\Class}{Cl}
\DeclareMathOperator{\Mod}{Mod}
\DeclareMathOperator{\ch}{ch}
\DeclareMathOperator{\ind}{ind}
\DeclareMathOperator{\pind}{i}
\DeclareMathOperator{\pres}{r}
\DeclareMathOperator{\Pind}{I}
\DeclareMathOperator{\Pres}{R}
\DeclareMathOperator{\triv}{triv}
\newcommand{\HH}{\operatorname{HH}}
\newcommand{\h}{\operatorname{H}}
\newcommand{\ol}{\overline}
\begin{document}
\title{Parahoric induction and chamber homology for $\SL_2$}
\author{Tyrone Crisp}
\maketitle

\begin{abstract}
We consider the special linear group $G=\SL_2$ over a $p$-adic field, and its diagonal subgroup $M\cong \GL_1$. Parabolic induction of representations from $M$ to $G$ induces a map in equivariant homology, from the Bruhat-Tits building of $M$ to that of $G$. We compute this map at the level of chain complexes, and show that it is given by parahoric induction (as defined by J.-F. Dat).
\end{abstract} 

\section*{Introduction}

Consider the special linear group $G=\SL_2(F)$ over a $p$-adic field $F$. \emph{Parabolic induction} is the functor $\pind_M^G$ which takes (smooth, complex) representations of the diagonal subgroup $M\subset G$, pulls them back to the upper-triangular subgroup $P$ along the quotient map $P\to M$, and then induces up to $G$. This construction is remarkably efficient: it generically preserves irreducibility, and the coincidences between the resulting representations of $G$ are few and (mostly) easily understood.

Now let $\mathcal O$ be the ring of integers of $F$. The functor producing representations of $K=\SL_2(\mathcal O)$ from representations of its diagonal subgroup $L$ according to the above recipe has fewer desirable properties: for example, the representations thus produced are infinite-dimensional, and therefore far from irreducible. Dat has proposed a replacement for $\pind_M^G$ in this context, called \emph{parahoric induction} \cite{Dat_Finitude}. 

The representation theory of $K$ (and of other compact open subgroups of reductive $p$-adic groups) is of interest not just for its own sake, but also in relation to the representation theory of $G$: see \cite{Bushnell-Kutzko_types}, for example. An appealing feature of Dat's construction is its compatibility with parabolic induction: there is a commutative diagram 
\[ \xymatrix@C=50pt{
\Mod(L) \ar[r]^-{\substack{\text{parahoric}\\ \text{induction}}} \ar[d]_-{\substack{\text{compact}\\ \text{induction}}} & \Mod(K) \ar[d]^-{\substack{\text{compact}\\ \text{induction}}} \\
\Mod(M) \ar[r]_-{\substack{\text{parabolic} \\ \text{induction}}} & \Mod(G) }
\]
of functors between categories of smooth representations. (Dat proves this for a general minimal Levi subgroup of a reductive group \cite[(1.4)]{Dat_Finitude}.)  

The main result of this paper is a commutative diagram of a similar kind:
\begin{equation}\label{intro_diagram}\tag{$\Asterisk$}
\xymatrix@C=50pt{
C_*^M(X_M) \ar[r]^-{\substack{\text{parahoric}\\ \text{induction}}} \ar@{-->}[d]_-{\substack{\text{compact}\\ \text{induction}}} & C_*^G(X_G) \ar@{-->}[d]^-{\substack{\text{compact}\\ \text{induction}}} \\
C_*(\Mod_f (M) ) \ar[r]_-{\substack{\text{parabolic} \\ \text{induction}}} & C_*(\Mod_f(G))}
\end{equation}
Here $X_G$ and $X_M$ denote the Bruhat-Tits buildings of $G$ and $M$. $C_*^G(X_G)$ is a complex of simplicial chains on $X_G/G$, the coefficients over a simplex $s$ being the representation ring of the isotropy group of $s$. (This is the canonical chain complex computing \emph{chamber homology} for $G$; see \cite{Baum-Connes-Higson}). $C_*^M(X_M)$ is the corresponding complex for the action of $M$ on $X_M$, whose isotropy groups are all equal to $L$. The map $C_*^M(X_M)\to C_*^G(X_G)$ combines the inclusion $X_M\into X_G$ with parahoric induction from $L$ to the isotropy subgroups of $G$.

In the bottom row of \eqref{intro_diagram}, the subscripts $f$ indicate the subcategories of finitely generated representations. $C_*$ here denotes the Hochschild complexes associated to these categories by Keller (\cite{Keller}; cf. \cite{Mitchell} and \cite{McCarthy}), and the map $C_*(\Mod_f(M))\to C_*(\Mod_f(G))$ is the one induced by the functor $\pind_M^G$. 
The vertical arrows are given, in degree zero, by inducing representations from the isotropy groups of vertices up to $G$ and $M$ respectively. In higher degree, these maps are defined only at the level of homology.

The commutativity of the diagram in degree zero is essentially Dat's result, for which we do not offer a new proof. The point of \eqref{intro_diagram} is that Dat's definition extends in a natural way to a map between chamber-homology complexes, which is still compatible with parabolic induction in higher degree. Since the homology groups for $\SL_2$ vanish in degree $\geq 2$, our extension of Dat's theorem is so far a modest one; partial results for $\SL_n$, discussed at the end of the paper, point toward a more ambitious generalisation. 

This paper has three sections. Section \ref{inflation_section} reviews Dat's construction, and presents a few new results, in a general setting; note, though, that unlike Dat we work only over $\C$. Section \ref{SL2_section} contains explicit calculations in the case of $\SL_2(\mathcal O)$. Section \ref{chamber_section} contains the main result, Theorem \ref{parahoric_chamber_theorem}, on the commutativity of \eqref{intro_diagram}. 
The theorem also gives a realisation in chamber homology of the Jacquet restriction functor $\pres^G_M$. This part is comparatively easy: the restriction map $C_*^G(X_G)\to C_*^M(X_M)$ is just the naive analogue of $\pres^G_M$ for compact subgroups. 

\bigskip

This research was partially supported under NSF grant DMS-1101382, and by the Danish National Research Foundation through the Centre for Symmetry and Deformation (DNRF92). Some of the results have previously appeared in the author's Ph.D. thesis, written at The Pennsylvania State University under the direction of Nigel Higson. 

\subsubsection*{Basic definitions, notation and conventions:} All vector spaces are over $\C$. If $G$ is a locally compact, totally disconnected group, then $\H(G)$  denotes the Hecke algebra of $G$, and $\Mod(G)$ is the category of (smooth) representations of $G$. The terminology is explained, for example, in \cite{Renard}. For a closed subgroup $H$ of $G$, and a representation $\rho$ of $H$, $\ind_H^G\rho$ is the space of locally constant, compactly supported functions $f:G\to\rho$ satisfying $f(hg)=hf(g)$ for all $g\in G$ and $h\in H$; $G$ acts on $\ind_H^G\rho$ by right translation. When $G$ is compact, $R(G)$ denotes the complexified representation ring of $G$, and $\Class^\infty(G)$ the space of locally constant class functions on $G$; the latter two spaces are isomorphic via the map sending a representation $\pi$ to its character $\ch_\pi$. We write $e_G$ for the function on $G$ with constant value $1/\vol(G)$. We make frequent appeal to ``the Mackey formula'' for the composition of restriction and induction functors; the version of this formula proved by Kutzko in \cite{Kutzko_Mackey} covers all of the cases that arise here. 

\section{Inflation for Groups with an Iwahori Decomposition}\label{inflation_section}

\subsubsection*{Definition and basic properties}

\begin{definition}  An \emph{Iwahori decomposition} of a compact totally disconnected group $J$ is a triple $(U,L,\ol{U})$ of closed subgroups of $J$, such that
\begin{enumerate}[label={\rm(\arabic*)}, leftmargin=0cm,itemindent=2em,labelwidth=\itemindent,labelsep=0cm,align=left]
 \item $L$ normalises $U$ and $\ol{U}$, and
\item The product map $U\times L\times\ol{U}\to J$ is a homeomorphism.
\end{enumerate}
(Note that if (2) holds, then thanks to (1) the same is true for any ordering of the factors $U$, $L$, and $\ol{U}$.) 
\end{definition}

The motivation for this definition comes from reductive $p$-adic groups: every such group $G$ contains arbitrarily small compact open subgroups which admit Iwahori decompositions compatible with the Levi decompositions of the parabolic subgroups of $G$. See \cite[V.5.2]{Renard} for a precise statement; the primordial example is \cite[\S 2.2]{Iwahori-Matsumoto}. 

The standard theory of invariant measures on homogeneous spaces (as in, e.g., \cite{Weil_Integration}) shows that:

\begin{lemma}\label{Haar_lemma} Let $J=UL\ol{U}$ be a group with an Iwahori decomposition, and let $\dd u$, $\dd l$ and $\dd \ol{u}$ be Haar measures on $U$, $L$ and $\ol{U}$ respectively. The product measure $\dd u\, \dd l\, \dd \ol{u}$ is a Haar measure on $J$.\hfill\qed
\end{lemma}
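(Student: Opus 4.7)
The plan is to deduce the lemma from Weil's formula for decomposing Haar measure over a closed subgroup, applied to $P := L\ol{U} \subset J$. In this way the three-coordinate factorisation reduces to two applications of the product-measure principle---one for the semidirect structure $P = L\ol{U}$, and one for the coset space $J/P \cong U$---each of which is straightforward because every group in sight is compact (hence unimodular, with Haar measure unique up to scalar).

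First I verify that $P$ is a closed subgroup: $L$ normalises $\ol{U}$ so $P$ is a subgroup, and $L \times \ol{U} \to J$ is a continuous injection (by uniqueness of Iwahori decompositions) of a compact space into a Hausdorff one, hence a homeomorphism onto a closed image. Next I check that the product measure $\dd l\,\dd\ol{u}$, transported along this homeomorphism, is a Haar measure on $P$. Left-invariance under $L$ is immediate from left-invariance of $\dd l$. For $\ol{u}_0\in\ol{U}$ I rewrite $\ol{u}_0 \cdot l\ol{u} = l\cdot(l^{-1}\ol{u}_0 l)\ol{u}$, note that the inner factor lies in $\ol{U}$ because $L$ normalises $\ol{U}$, and observe that for each fixed $l$ the translation $\ol{u} \mapsto (l^{-1}\ol{u}_0 l)\ol{u}$ preserves $\dd\ol{u}$; Fubini then gives the claim.

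With Haar measure on $P$ in hand I apply Weil: since $J$ and $P$ are both unimodular, $J/P$ carries a $J$-invariant positive measure $\tau$, unique up to scalar, with respect to which Haar measure $\nu$ on $J$ decomposes as $\dd\nu = \dd\tau\otimes(\dd l\,\dd\ol{u})$. The Iwahori decomposition furnishes a continuous bijection $U \to J/P$, $u\mapsto uP$, between compact Hausdorff spaces, hence a homeomorphism; transported to $U$, the $J$-action on $J/P$ restricts to left translation of $U$ on itself, so the pulled-back $\tau$ is left-$U$-invariant, hence a positive scalar multiple of $\dd u$. Assembling the pieces yields $\dd\nu = c\cdot\dd u\,\dd l\,\dd\ol{u}$ for some $c > 0$, which is the statement of the lemma.

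The only step with any real content is the $\ol{U}$-invariance of $\dd l\,\dd\ol{u}$ on $P$, and this is the one place where the normalisation hypothesis on $L$ plays an essential role; everything else follows formally from Weil's formula and the uniqueness of Haar measure on compact groups.
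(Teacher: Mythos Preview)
Your proof is correct and follows exactly the route the paper gestures at: the paper's ``proof'' is simply a reference to Weil's theory of invariant measures on homogeneous spaces, and you have spelled out that argument in detail---splitting the factorisation into the semidirect product $P=L\ol{U}$ and the coset space $J/P\cong U$, and invoking Weil's quotient integration formula. One small remark on phrasing: by ``the normalisation hypothesis on $L$'' you presumably mean the hypothesis that $L$ normalises $\ol{U}$; this is clear from context, but it might be worth saying explicitly.
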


Let $J=UL\ol{U}$ be a group with a fixed Iwahori decomposition. From now on we assume that the Haar measures on $J$, $U$, $L$, and $\ol{U}$ are all normalised to have total volume $1$. The Hecke algebra $\H(J)$ is both a left and a right module over $\H(U)$, $\H(L)$, and $\H(\ol{U})$. Since $L$ normalises $U$ and $\ol{U}$, the action of $\H(L)$ commutes with the idempotents $e_U$ and $e_{\ol{U}}$.

The following definition is Dat's \cite{Dat_Finitude}.

\begin{definition} Consider the tensor-product functors
\[ \pind_{U,\ol{U}}:\Mod(L)\to\Mod(J)\qquad \pind_{U,\ol{U}}\rho= \H(J)e_{\ol{U}}e_U\otimes_{\H(L)}\rho\]
\[ \pres_{U,\ol{U}}:\Mod(J)\to\Mod(L)\qquad \pres_{U,\ol{U}}\pi= e_{U}e_{\ol{U}}\H(J)\otimes_{\H(J)}\pi.\]
\end{definition}

The following concrete realisations of $\pind_{U,\ol{U}}$ and $\pres_{U,\ol{U}}$ are sometimes useful. Let $\pind_U,\ \pind_{\ol{U}}:\Mod(L)\to\Mod(J)$ be the composite functors $\pind_U= \ind_{LU}^J \infl_L^{LU}$ and $\pind_{\ol{U}}= \ind_{L{\ol{U}}}^J \infl_{L}^{L\ol{U}}$, where for example $\infl_L^{LU}$ is the functor of inflation, i.e., pull-back along the quotient map $LU\to L$. Then $\pind_U\rho\cong \H(J)e_U\otimes_{\H(L)}\rho$, and likewise for $\pind_{\ol{U}}$ and $\H(J)e_{\ol{U}}$. Computing the map $\H(J)e_{\ol{U}}\xrightarrow{f\mapsto fe_U}\H(J)e_U$ in this picture, one finds that
\[ \pind_{U,\ol{U}}\rho\cong \image\left(\pind_{\ol{U}}\rho\xrightarrow{I_U} \pind_U\rho\right), \quad \text{where}\quad I_U(f)(j)=\int_U f(uj)\,\dd u.\]
Similarly, let $\pres_U,\ \pres_{\ol{U}}:\Mod(J)\to\Mod(L)$ be the functors
$\pres_U \pi=\pi^U$ (the $U$-invariants in $\pi$), and $\pres_{\ol{U}}\pi=\pi^{\ol{U}}$. Then
\[ \pres_{U,\ol{U}}\pi\cong \image\left( \pres_{\ol{U}}\pi\xrightarrow{e_U}\pres_{U}\pi\right),\quad\text{where}\quad e_U(x)=\int_U ux\, \dd u.\]

We shall use another characterisation of $\pind_{U,\ol{U}}$ and $\pres_{U,\ol{U}}$, based on the following observation. 

\begin{lemma}\label{Centre_lemma} The map $\Phi:\Centre(L)\to\Hom_{J\times L}(\H(J)e_{\ol{U}},\H(J)e_{U})$ defined by $\Phi(z):f\mapsto fe_{U}z$
is a $\Centre(L)$-linear isomorphism. 
\end{lemma}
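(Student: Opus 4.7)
The plan is to combine a tensor-hom adjunction with an explicit description of bi-invariants coming from the Iwahori decomposition.

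First I would verify that $\Phi$ takes values in $\Hom_{J\times L}$ and is $\Centre(L)$-linear. Left $J$-equivariance of $\Phi(z)$ is clear, and right $L$-equivariance follows from the identities $le_U=e_Ul$ (a consequence of $L$ normalising $U$) and $lz=zl$ (since $z$ is central in $\H(L)$).

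For bijectivity I would apply the tensor-hom adjunction to the description $\H(J)e_{\ol{U}}=\H(J)\otimes_{\H(\ol{U})}\triv_{\ol{U}}$, obtaining a natural isomorphism
\[ \Hom_J(\H(J)e_{\ol{U}},W) \cong e_{\ol{U}}W \]
for every smooth $J$-module $W$, which for $W=\H(J)e_U$ identifies $J$-equivariant maps with elements of $e_{\ol{U}}\H(J)e_U$. Chasing through the adjunction shows that right $L$-equivariance of a map $T$ translates into $lw=wl$ for all $l\in L$, where $w\in e_{\ol{U}}\H(J)e_U$ is the corresponding element. Next, using that the Iwahori decomposition $\ol{U}\times L\times U\to J$ is a homeomorphism, I would identify $e_{\ol{U}}\H(J)e_U$ with $\H(L)$ as an $(L,L)$-bimodule by pullback along the projection $J\to L$: any locally constant function on $J$ that is left $\ol{U}$- and right $U$-invariant is uniquely of the form $\tilde\phi(\ol{u}lu)=\phi(l)$ for some $\phi\in\H(L)$, and the two bimodule structures match because $L$ normalises $\ol{U}$ and $U$ and because Haar measure on a compact group is invariant under all continuous automorphisms. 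Under this identification, the $L$-commuting elements on the right correspond exactly to the class functions $\Centre(L)\subseteq\H(L)$.

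The final step is to check that $\Phi$ agrees with the composite of the isomorphisms above, which reduces to the identity $fe_Uz=f\tilde z$ for every $f\in\H(J)e_{\ol{U}}$. This I would prove directly: expand $f\tilde z$ as an iterated integral over $\ol{U}\times L\times U$ using Lemma \ref{Haar_lemma}, collapse the $\ol{U}$-integral using right $\ol{U}$-invariance of $f$, and apply the change of variable $u\mapsto l^{-1}ul$ on $U$ (which preserves the normalised Haar measure, since $U$ is compact) to rewrite the remaining double integral as $fe_Uz$. This last change-of-variable computation is the main, though routine, technical step; the rest of the argument is formal.
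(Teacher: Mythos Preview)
Your proposal is correct and takes essentially the same approach as the paper: the tensor--hom adjunction you invoke is the Frobenius reciprocity the paper uses, and your identification of $e_{\ol{U}}\H(J)e_U$ with $\H(L)$ via the Iwahori decomposition is the concrete form of the paper's natural isomorphism $\pres_U\pind_{\ol{U}}\cong\id_L$ (evaluation at the identity). The paper packages the last step as an application of the Yoneda lemma and checks only that $\Phi(1):f\mapsto fe_U$, inferring the rest from $\Centre(L)$-linearity, whereas you verify the general formula $fe_Uz=f\tilde z$ by a direct integral computation---but this is a difference in presentation rather than substance.
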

\noindent (Here $\Centre(L)\cong \End_{L\times L}(\H(L))$ denotes the Bernstein centre of $L$ \cite{Bernstein-Deligne}.)

\begin{proof} Frobenius reciprocity gives $\Hom_J(\pind_{\ol{U}},\pind_{U})\cong \Hom_L(\pres_{U}\pind_{\ol{U}},\id)$. Evaluation of functions at the identity in $J$ gives a natural transformation $\pres_{U}\pind_{\ol{U}}\to \id_L$, which is an isomorphism because $J=\ol{U}LU$. Applying the Yoneda lemma, we obtain an isomorphism 
\[ \Centre(L)\cong \End_{L\times L}(\H(L))\xrightarrow[\cong]{\Phi}\Hom_{J\times L}(\H(J)e_{\ol{U}},\H(J)e_{U}),\] which is $\Centre(L)$-linear by the naturality of the construction. Computing the Frobenius reciprocity isomorphism explicitly, one finds that $\Phi(1):f\mapsto fe_{U}$.
\end{proof}

\begin{lemma}\label{i_Schur_lemma} If $\rho$ is an irreducible representation of $L$, then $\pind_{U,\ol{U}}\rho$ is the unique irreducible representation of $J$ common to both $\pind_U\rho$ and $\pind_{\ol{U}}\rho$. Moreover, $\pind_{U,\ol{U}}\rho$ has multiplicity one in both $\pind_U\rho$ and $\pind_{\ol{U}}\rho$.
\end{lemma}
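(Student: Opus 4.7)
My plan is to reduce the lemma to the single dimension count
\[ \dim_\C \Hom_J(\pind_{\ol U}\rho,\pind_U\rho)=1, \]
and then exploit the semisimplicity of smooth representations of the compact group $J$.

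For the Hom-space computation, I apply Frobenius reciprocity to the first factor, using that $\pind_{\ol U}=\ind_{L\ol U}^J\circ\infl_L^{L\ol U}$ and that $\infl_L^{L\ol U}$ is left adjoint to the functor of $\ol U$-invariants, to obtain
\[ \Hom_J(\pind_{\ol U}\rho,\pind_U\rho)\;\cong\;\Hom_L\bigl(\rho,(\pind_U\rho)^{\ol U}\bigr). \]
The same argument as in the proof of Lemma \ref{Centre_lemma}, but applied to the alternative ordering $J=LU\ol U$ of the Iwahori decomposition, shows that evaluation at $1\in J$ defines a natural $L$-module isomorphism $(\pind_U\rho)^{\ol U}\cong\rho$. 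Combined with Schur's lemma, this gives the dimension count.

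Because $J$ is compact, both $\pind_U\rho$ and $\pind_{\ol U}\rho$ decompose as finite direct sums of irreducibles. The standard formula for Hom-spaces between semisimple modules, combined with $\dim\Hom=1$, forces exactly one irreducible $\pi$ to appear in both decompositions, with multiplicity one in each; this yields the uniqueness and multiplicity-one assertions. To identify $\pi$ with $\pind_{U,\ol U}\rho$, I note that $I_U$ is a nonzero element of the one-dimensional space $\Hom_J(\pind_{\ol U}\rho,\pind_U\rho)$ (it corresponds, via the $\otimes_{\H(L)}\rho$ construction, to the identity element of $\Centre(L)$ in Lemma \ref{Centre_lemma}), hence spans it; and the image of any nonzero map between two semisimple modules whose Hom space is one-dimensional is precisely their common irreducible summand. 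Therefore $\pind_{U,\ol U}\rho=\image(I_U)\cong\pi$.

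I do not anticipate a serious obstacle. The only step requiring any unwinding is the identification $(\pind_U\rho)^{\ol U}\cong\rho$, which follows directly from $J=LU\ol U$ together with the $L$-normalisation of $U$; everything else is formal.
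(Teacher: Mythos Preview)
Your proposal is correct and follows essentially the same route as the paper. The paper's proof simply cites Lemma~\ref{Centre_lemma} to conclude that $\Hom_J(\pind_{\ol U}\rho,\pind_U\rho)$ is one-dimensional and spanned by $I_U$, then invokes Schur; you have unpacked that citation by redoing the Frobenius-reciprocity computation from the proof of Lemma~\ref{Centre_lemma} (using the left-adjoint property of $\pind_{\ol U}$ rather than the right-adjoint property of $\pind_U$, but arriving at the same identification via evaluation at $1$), and you have spelled out the semisimplicity argument that the paper leaves implicit in the phrase ``follows from Schur's lemma.''
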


\begin{proof} Lemma \ref{Centre_lemma} implies that $\Hom_J(\pind_{\ol{U}}\rho,\pind_U\rho)$ is one-dimensional, spanned by $I_U$. The result now follows from Schur's lemma.
\end{proof}

\begin{examples}
\begin{enumerate}[label={\rm(\arabic*)}, leftmargin=0cm,itemindent=2em,labelwidth=\itemindent,labelsep=0cm,align=left]
\item When $\ol{U}=\{1\}$ is trivial, so that $J\cong L\ltimes U$, one has $\pind_{U,\ol{U}}\cong \pind_U = \infl_L^J$, the usual inflation functor. 
\item Let $\triv_L$ be the trivial representation of $L$. Then $\triv_J$ sits inside both $\pind_U\triv_L$ and $\pind_{\ol{U}}\triv_L$, as the space of constant functions in each case. So $\pind_{U,\ol{U}}\triv_L=\triv_J$.
\item Let $\tilde{\rho}$ denote the (smooth) contragredient of $\rho$. Then $\widetilde{\pind_U\rho}\cong \pind_U\tilde{\rho}$, and likewise for $\pind_{\ol{U}}$, and so Lemma \ref{i_Schur_lemma} implies that $\widetilde{\pind_{U,\ol{U}}\rho}\cong \pind_{U,\ol{U}}\tilde{\rho}$.
\end{enumerate}
\end{examples}

\begin{definition-lemma}\label{z_definition} Let $\tilde{z}_{U,\ol{U}}\in\Centre(L)$ be the preimage of the map
\[ \H(J)e_{\ol{U}}\xrightarrow{f\mapsto fe_{U}e_{\ol{U}}e_{U}}\H(J)e_{U}\] under the isomorphism $\Phi$ of Lemma \ref{Centre_lemma}. Let $z_{U,\ol{U}}$ be the image of $\tilde{z}_{U,\ol{U}}$ under the involution $l\mapsto l^{-1}$ on $\Centre(L)$. Then $z_{U,\ol{U}}$ is invertible.
\end{definition-lemma}

\begin{proof} We must show that $\tilde{z}_{U,\ol{U}}$ acts as a nonzero scalar on each irreducible representation $\rho$ of $L$. Lemma \ref{i_Schur_lemma} ensures that \[ \pind_{\ol{U}}\rho\xrightarrow{I_U}\pind_U\rho\xrightarrow{I_{\ol{U}}}\pind_{\ol{U}}\rho \xrightarrow{I_U} \pind_U\rho\] restricts to a composition of nonzero intertwining operators of the irreducible representation $\pind_{U,\ol{U}}\rho$, and so this composition is nonzero by Schur's lemma.
\end{proof}

Other descriptions of $z$ appear in Proposition \ref{character_proposition} and Remark \ref{dihedral_remark}. Explicit formulae for the Iwahori subgroup in $\SL_2(F)$ are given in \cite[Section 2.4]{Dat_Finitude} and in Proposition \ref{SL2_iz_proposition}.

\begin{proposition}\label{z_proposition}{\rm \cite[Proposition 2.2]{Dat_Finitude}} The operator $z_{U,\ol{U}}^{-1}e_U e_{\ol{U}}$ acts as an idempotent on each representation of $J$.
\end{proposition}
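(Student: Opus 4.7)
The plan is to reduce the proposition to the single quadratic identity
\[ (e_U e_{\bar U})^2 \;=\; z_{U,\bar U} \cdot e_U e_{\bar U}, \]
interpreted as an identity of operators on any smooth representation of $J$. Given this identity, the invertibility of $z_{U,\bar U}$ from Definition/Lemma \ref{z_definition}, together with the fact that $L$ normalises $U$ and $\ol U$ (so that each element of $\H(L)$, and hence of $\Centre(L)$, commutes with the idempotents $e_U$ and $e_{\ol U}$), immediately gives
\[ (z_{U,\bar U}^{-1} e_U e_{\ol U})^2 \;=\; z_{U,\bar U}^{-2}\,(e_U e_{\ol U})^2 \;=\; z_{U,\bar U}^{-2}\cdot z_{U,\bar U}\cdot e_U e_{\ol U} \;=\; z_{U,\bar U}^{-1} e_U e_{\ol U}. \]

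To produce the key identity, I would first unwind the definition of $\tilde z_{U,\ol U}$: by Lemma \ref{Centre_lemma} the equality $\Phi(\tilde z_{U,\ol U})(f)=f e_U e_{\ol U} e_U$ translates into
\[ f e_U \tilde z_{U,\ol U} \;=\; f e_U e_{\ol U} e_U \qquad \text{for every } f\in \H(J)e_{\ol U}, \]
and specialising $f=e_{\ol U}$ yields the relation $e_{\ol U} e_U \tilde z_{U,\ol U} = e_{\ol U} e_U e_{\ol U} e_U$. Next I would apply the standard anti-involution $\phi\mapsto \phi^{\vee}$ on $\H(J)$ defined by $\phi^{\vee}(g) = \phi(g^{-1})$. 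It reverses products, fixes each of $e_U$ and $e_{\ol U}$ (because $U$ and $\ol U$ are closed under inversion), and restricts on class functions of $L$ to the involution $l\mapsto l^{-1}$ used to define $z_{U,\ol U}$, so that $\tilde z_{U,\ol U}^{\vee}=z_{U,\ol U}$. Applying $\vee$ to the displayed relation gives exactly $z_{U,\ol U}\cdot e_U e_{\ol U} = e_U e_{\ol U} e_U e_{\ol U}$.

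The main obstacle is interpretational rather than computational: $\tilde z_{U,\ol U}$ and $z_{U,\ol U}$ are elements of $\Centre(L)$ and need not belong to $\H(L)$, so the identities above do not literally live in $\H(J)$. This is resolved by fixing a smooth $J$-representation $\pi$ and viewing every manipulation as taking place among operators on $\pi$: elements of $\Centre(L)$ act canonically on the restriction $\pi\restrict_L$, the relation of Lemma \ref{Centre_lemma} descends to this action, and the anti-involution $\vee$ corresponds to conjugating operators by the substitution $g\mapsto g^{-1}$, under which each step of the argument remains valid. Once these interpretive points are in place the three-line computation above finishes the proof.
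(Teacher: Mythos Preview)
Your proposal is correct and follows essentially the same route as the paper's own proof: derive $(e_{\ol U}e_U)^2=\tilde z_{U,\ol U}\,e_{\ol U}e_U$ from the defining property of $\tilde z_{U,\ol U}$, then apply the anti-involution $j\mapsto j^{-1}$ to obtain $(e_Ue_{\ol U})^2=z_{U,\ol U}\,e_Ue_{\ol U}$. Your treatment is in fact more careful than the paper's about the interpretive point that $\tilde z_{U,\ol U}$ lies in $\Centre(L)$ rather than $\H(L)$.
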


\begin{proof} The definition of $\tilde{z}_{U,\ol{U}}$ ensures that
\[ f(e_{\ol{U}}e_U)^2=f\tilde{z}_{U,\ol{U}}e_{\ol{U}}e_U\]
for every $f\in \H(J)$. Applying the involution $j\mapsto j^{-1}$ on $\H(J)$ gives the desired result.
\end{proof}

The following basic properties of $\pind_{U,\ol{U}}$ and $\pres_{U,\ol{U}}$ follow easily from Proposition \ref{z_proposition}, as in \cite[Lemme 2.8 and Corollaire 2.9]{Dat_Finitude}:

\begin{proposition}\label{parahoric_basic_proposition}
\begin{enumerate}[label={\rm(\arabic*)}, leftmargin=0cm,itemindent=2em,labelwidth=\itemindent,labelsep=0cm,align=left]
\item There are isomorphisms $\pind_{U,\ol{U}}\cong \pind_{\ol{U},U}$ and $\pres_{U,\ol{U}}\cong \pres_{\ol{U},U}$.
\item $\pind_{U,\ol{U}}$ and $\pres_{U,\ol{U}}$ are mutual two-sided adjoints.
\item $\pres_{U,\ol{U}}\pind_{U,\ol{U}}\cong \id_L$.\hfill\qed
\end{enumerate}
\end{proposition}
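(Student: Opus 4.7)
The plan is to deduce all three assertions from Proposition~\ref{z_proposition} by thinking of $\pres_{U,\ol{U}}$ as the functor sending a smooth $J$-representation $\pi$ to the image of the idempotent $p=z_{U,\ol{U}}^{-1}e_Ue_{\ol{U}}$ acting on $\pi$, and of $\pind_{U,\ol{U}}$ as the image of the conjugate idempotent acting on the Hecke bimodule.

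For part~(1), I would apply Proposition~\ref{z_proposition} with the roles of $U$ and $\ol{U}$ exchanged, producing a second idempotent $p'=z_{\ol{U},U}^{-1}e_{\ol{U}}e_U$ whose image realises $\pres_{\ol{U},U}$. The maps $p\pi\to p'\pi$ and back, given by left multiplication by $e_{\ol{U}}$ and $e_U$ respectively, can be shown to be mutually inverse up to invertible central scalars, using the identities $(e_Ue_{\ol{U}})^2=z_{U,\ol{U}}e_Ue_{\ol{U}}$ and $(e_{\ol{U}}e_U)^2=\tilde z_{U,\ol{U}}e_{\ol{U}}e_U$ extracted from the proofs of Definition/Lemma~\ref{z_definition} and Proposition~\ref{z_proposition}. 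The corresponding isomorphism $\pind_{U,\ol{U}}\cong\pind_{\ol{U},U}$ is obtained by the same manipulation applied to the $\H(J)$-$\H(L)$-bimodules $\H(J)e_{\ol{U}}e_U$ and $\H(J)e_Ue_{\ol{U}}$.

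For part~(2), I would use that on the compact totally disconnected group $J$, induction and restriction are two-sided adjoint, so $\pind_U\dashv\pres_U\dashv\pind_U$ and analogously for $\ol{U}$. The functors $\pind_{U,\ol{U}}$ and $\pres_{U,\ol{U}}$ are cut out from these ambient functors by the idempotent $p$ (or its conjugate), and a two-sided adjoint pair automatically restricts to a two-sided adjoint pair on corresponding idempotent summands.

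For part~(3), compute $\pres_{U,\ol{U}}\pind_{U,\ol{U}}\rho$ directly: up to the invertible central factor $z_{U,\ol{U}}$, it equals $\bigl(e_Ue_{\ol{U}}\H(J)e_{\ol{U}}e_U\bigr)\otimes_{\H(L)}\rho$. Using Lemma~\ref{Haar_lemma} to write a generic element of $\H(J)$ as an integral over $U\times L\times\ol{U}$, together with the relation $(e_Ue_{\ol{U}})^2=z_{U,\ol{U}}e_Ue_{\ol{U}}$, one verifies that the natural map $\H(L)\to e_Ue_{\ol{U}}\H(J)e_{\ol{U}}e_U$, $h\mapsto z_{U,\ol{U}}\cdot e_Ue_{\ol{U}}he_{\ol{U}}e_U$, is an isomorphism of $\H(L)$-bimodules. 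Tensoring with $\rho$ and cancelling the central scalar recovers $\rho$. The main obstacle is keeping track of the various incarnations of the central element (the $z$ versus $\tilde z$ versions and their swapped-variable analogues) in part~(1); once those identifications are in hand, parts~(2) and~(3) are essentially formal consequences of the idempotent structure furnished by Proposition~\ref{z_proposition}.
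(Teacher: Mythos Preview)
Your approach is essentially the one the paper indicates: it does not give a proof, but simply states that parts (1)--(3) ``follow easily from Proposition~\ref{z_proposition}, as in \cite[Lemme 2.8 and Corollaire 2.9]{Dat_Finitude}'', and your sketch is a reasonable unpacking of exactly that. The only place to tighten is part~(2): the slogan ``a two-sided adjoint pair restricts to idempotent summands'' is not a general fact and should be replaced by the direct bimodule computation $\Hom_{J}(\H(J)e_{\ol U}e_U\otimes_{\H(L)}\rho,\pi)\cong\Hom_{L}(\rho,e_{\ol U}e_U\pi)$, which is immediate once one knows $\tilde z^{-1}e_{\ol U}e_U$ is idempotent, together with part~(1).
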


We also obtain a counterpart to Lemma \ref{i_Schur_lemma} for $\pres_{U,\ol{U}}$:

\begin{lemma} \label{r_Schur_lemma}
Let $\pi$ be an irreducible representation of $J$. Then 
\[\dim \Hom_L(\pres_{\ol{U}}\pi,\pres_U\pi)=0 \text{ or } 1.\] If the dimension is zero, then $\pres_{U,\ol{U}}\pi=0$. If the dimension is $1$, then $\pres_{U,\ol{U}}\pi\cong \pres_U\pi\cong \pres_{\ol{U}}\pi$.
\end{lemma}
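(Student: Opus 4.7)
The plan is to show that $\pres_{U,\ol{U}}\pi$ vanishes exactly when $\Hom_L(\pres_{\ol{U}}\pi,\pres_U\pi)=0$, and that in the remaining case all three of $\pres_U\pi$, $\pres_{\ol{U}}\pi$, and $\pres_{U,\ol{U}}\pi$ coincide with a single irreducible $L$-representation.

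The key ingredient is the computation $\pres_U\pind_{\ol{U}}\cong \id_L \cong \pres_{\ol{U}}\pind_U$, which follows immediately from the Iwahori decomposition: since $J=L\ol{U}U$ with $L\ol{U}\cap U=\{1\}$, an element of $\pind_{\ol{U}}\tau$ is determined by its restriction to $U$, which can be an arbitrary function $U\to\tau$, and the $U$-action on these functions is right translation; taking $U$-invariants yields the constant functions, on which $L$ acts as on $\tau$ itself.

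Now suppose $\pres_{U,\ol{U}}\pi\neq 0$ and pick an irreducible $L$-subrepresentation $\tau\subseteq \pres_{U,\ol{U}}\pi\subseteq \pi^U$. Because the map $e_U\colon \pi^{\ol{U}}\twoheadrightarrow \pres_{U,\ol{U}}\pi$ is an $L$-equivariant surjection and smooth $L$-representations are semisimple ($L$ is compact), $\tau$ also embeds into $\pi^{\ol{U}}$. Frobenius reciprocity gives nonzero $J$-maps $\pind_U\tau\to\pi$ and $\pind_{\ol{U}}\tau\to\pi$, both surjective by irreducibility of $\pi$, and so Lemma \ref{i_Schur_lemma} forces $\pi\cong\pind_{U,\ol{U}}\tau$. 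Viewing $\pind_{U,\ol{U}}\tau$ as the image of $I_U\colon \pind_{\ol{U}}\tau\to\pind_U\tau$ exhibits $\pi$ simultaneously as a subrepresentation of $\pind_U\tau$ and a quotient of $\pind_{\ol{U}}\tau$. Applying the exact functors $\pres_U$ and $\pres_{\ol{U}}$ (exactness follows from the fact that $e_U$ and $e_{\ol{U}}$ act as idempotent projections onto the invariants) and invoking the key computation yields
\[ \pi^{\ol{U}}\hookrightarrow \pres_{\ol{U}}\pind_U\tau\cong\tau,\qquad \tau\cong \pres_U\pind_{\ol{U}}\tau\twoheadrightarrow \pi^U, \]
and irreducibility of $\tau$ upgrades these to isomorphisms $\pi^U\cong\pi^{\ol{U}}\cong\tau$.

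Putting the pieces together: if $\pres_{U,\ol{U}}\pi\neq 0$, then $\pres_U\pi\cong\pres_{\ol{U}}\pi\cong\pres_{U,\ol{U}}\pi\cong\tau$, so $\dim\Hom_L(\pres_{\ol{U}}\pi,\pres_U\pi)=\dim\End_L(\tau)=1$. Conversely, if the Hom-space is nonzero, any common irreducible $L$-constituent of $\pi^U$ and $\pi^{\ol{U}}$ can play the role of $\tau$ in the argument above, forcing $\pres_{U,\ol{U}}\pi\cong\tau\neq 0$; contrapositively, $\pres_{U,\ol{U}}\pi=0$ forces the Hom-space to vanish. I expect the only nontrivial step is establishing $\pres_U\pind_{\ol{U}}\cong \id_L$; once that is in hand, everything reduces to bookkeeping with Frobenius reciprocity and Lemma \ref{i_Schur_lemma}.
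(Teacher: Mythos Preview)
Your argument is essentially the paper's: both pivot on Frobenius reciprocity, Lemma~\ref{i_Schur_lemma}, and the identification $\pres_U\pind_{\ol U}\cong\id_L$ to force $\pi\cong\pind_{U,\ol U}\tau$ and then squeeze $\pi^U$ and $\pi^{\ol U}$ down to $\tau$. Two small points are worth noting. First, the paper handles the vanishing case more directly than you do: since $e_U\colon\pres_{\ol U}\pi\to\pres_U\pi$ is itself an element of $\Hom_L(\pres_{\ol U}\pi,\pres_U\pi)$, the implication ``$\Hom=0\Rightarrow\pres_{U,\ol U}\pi=0$'' is immediate, with no need to pass through the contrapositive. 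Second, your ``converse'' sentence has a small circularity: when you start only from a common constituent $\tau$ of $\pi^U$ and $\pi^{\ol U}$, the ``argument above'' yields $\pi\cong\pind_{U,\ol U}\tau$ and $\pi^U\cong\pi^{\ol U}\cong\tau$, but not yet $\pres_{U,\ol U}\pi\neq 0$---in the earlier paragraph that nonvanishing was the \emph{hypothesis}. To close the loop you should invoke Proposition~\ref{parahoric_basic_proposition}(3), which gives $\pres_{U,\ol U}\pi\cong\pres_{U,\ol U}\pind_{U,\ol U}\tau\cong\tau$ directly.
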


\begin{proof} First note that if $\Hom_L(\pres_{\ol{U}}\pi,\pres_U\pi)=0$, then in particular the map $e_U:\pres_{\ol{U}}\pi\to \pres_U\pi$ is zero, and so its image $\pres_{U,\ol{U}}\pi$ is zero. 

Now suppose that the intertwining space is nonzero. There exists an irreducible representation $\rho$ of $L$ common to both $\pres_U\pi$ and $\pres_{\ol{U}}\pi$, which by Frobenius reciprocity implies that $\pi$ is a common irreducible component of $\pind_U\rho$ and $\pind_{\ol{U}}\rho$. So by Lemma \ref{i_Schur_lemma}, $\pi\cong \pind_{U,\ol{U}}\rho$. Thus $\pres_U\pi$ is a nonzero quotient of the irreducible representation $\pres_U\pind_{\ol{U}}\rho\cong \rho$, so $\pres_U\pi\cong\rho$. Similarly, $\pres_{\ol{U}}\pi\cong \rho$, and so $\Hom_L(\pres_{\ol{U}}\pi,\pres_U\pi) \cong \End_L(\rho)$ is one-dimensional.
\end{proof}

\subsubsection*{Character formulae}

Let $J=UL\ol{U}$ be a compact totally disconnected group with an Iwahori decomposition. All the groups in question will be fixed throughout this section, and we write $\pind=\pind_{U,\ol{U}}$ and $\pres=\pres_{U,\ol{U}}$. Passing from representations $\pi$ to their characters $\ch_\pi$, we may view $\pind$ and $\pres$ as maps between the spaces $\Class^\infty(J)$ and $\Class^\infty(L)$ of class functions on $J$ and $L$.

For example, suppose that $J=UL$ (i.e., $\ol{U}=\{1\}$), so that $\pind$ is the the usual inflation of representations, while $\pres$ is the functor $\pi\mapsto\pi^U$. The action on characters is easily computed:
$\pind$ is given by pulling functions back along the quotient map $J\to L$, while $\pres$ is given by integration along the fibres of this map.

Returning to the general case, consider the map $\lambda=\lambda_{U,\ol{U}}:J\to L$ defined by $\lambda(ul\ol{u})=l$. Then define $\lambda_*:\Class^\infty(J)\to \Class^\infty(L)$ and $\lambda^*:\Class^\infty(L)\to \Class^\infty(J)$ by
\[ (\lambda_*\phi) (l)= \int_{U}\int_{\ol{U}} \phi(ul\ol{u})\, \dd \ol{u}\, \dd u \qquad \text{and} \qquad (\lambda^*\psi) (j)= \int_J\psi\big( \lambda(k^{-1}jk)\big) \ \dd k,\]
for $\phi\in \Class^\infty(J)$ and $\psi\in \Class^\infty(L)$.

\begin{proposition}\label{character_proposition}
Let $J=UL\ol{U}$ be a group with Iwahori decomposition, and let $z=z_{U,\ol{U}}\in\Centre(L)$ be as in Proposition \ref{z_proposition}.
\begin{enumerate}[label={\rm(\arabic*)}, leftmargin=0cm,itemindent=2em,labelwidth=\itemindent,labelsep=0cm,align=left]
\item The maps $\displaystyle \xymatrix@1{ \Class^\infty(J) \ar@<0.5ex>[r]^{\pres} & \Class^\infty(L) \ar@<0.5ex>[l]^{\pind}}$ are given by
$\pres = z^{-1}\lambda_*$ and $\pind=\lambda^*z^{-1}$.
\item For each irreducible representation $\rho$ of $L$, one has
$\displaystyle z(\rho)=\frac{\dim\rho}{\dim(\pind\rho)}$.
\end{enumerate}
\end{proposition}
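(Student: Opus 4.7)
My strategy is to prove formula (1) for $\pres$ by a direct trace computation, to deduce (1) for $\pind$ from it by Frobenius reciprocity, and finally to derive (2) by evaluating the $\pind$-formula at $j=1$.

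For the formula for $\pres$, I would fix a representation $\pi$ of $J$ and an element $l \in L$, and compute $\text{trace}(l\, e_U e_{\ol U} \mid \pi)$ in two different ways. On one hand, Proposition \ref{z_proposition} tells us that $z^{-1} e_U e_{\ol U}$ is an idempotent on $\pi$ with image the $L$-subrepresentation $\pres\pi$; since $L$ normalises both $U$ and $\ol U$, the operators $l$ and $z$ commute with this idempotent, and a standard argument shows that $\text{trace}(l\, e_U e_{\ol U} \mid \pi) = \text{trace}(lz \mid \pres\pi) = (z \cdot \ch_{\pres\pi})(l)$. On the other hand, Fubini gives $\text{trace}(l\, e_U e_{\ol U} \mid \pi) = \int_U \int_{\ol U} \ch_\pi(l u \ol u)\, du\, d\ol u$, and substituting $u \mapsto l u l^{-1}$ (whose Jacobian is $1$, since conjugation by $l$ is an automorphism of the compact group $U$) converts this integral into $\lambda_*\ch_\pi(l)$. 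Comparing the two expressions yields $\lambda_*\ch_\pi = z \cdot \ch_{\pres\pi}$, which is (1) for $\pres$.

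For the formula for $\pind$, a routine Fubini calculation using Lemma \ref{Haar_lemma} and the $J$-invariance of class functions shows that $\lambda^*$ and $\lambda_*$ are adjoint with respect to the Hermitian inner products on $\Class^\infty(J)$ and $\Class^\infty(L)$. Combining this with the mutual adjunction of $\pind$ and $\pres$ in Proposition \ref{parahoric_basic_proposition}(2) and the formula for $\pres$ gives
\[ \langle \ch_{\pind\rho}, \ch_\pi \rangle_J = \langle \ch_\rho, z^{-1}\lambda_*\ch_\pi \rangle_L = \langle \lambda^*(z^{-1}\ch_\rho), \ch_\pi \rangle_J \]
for every irreducible $\pi$, forcing $\ch_{\pind\rho} = \lambda^*(z^{-1}\ch_\rho)$. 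The rearrangement in the second equality requires $z(\rho) \in \R$, which I would establish by putting a unitary structure on $\pi = \pind\rho$ so that $e_U$ and $e_{\ol U}$ become orthogonal projections $P, Q$; then $\text{trace}(PQ \mid \pi) = \|QP\|_{\mathrm{HS}}^2$ is a nonnegative real number, while the trace calculation of the previous paragraph, specialised to $l=1$, identifies this trace as $z(\rho)\dim\rho$.

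Part (2) follows immediately from (1) applied to the identity: since $\lambda(1)=1$ and $\vol(J)=1$, the right-hand side of $\ch_{\pind\rho} = \lambda^*(z^{-1}\ch_\rho)$ evaluated at $j=1$ equals $z(\rho)^{-1}\ch_\rho(1) = z(\rho)^{-1}\dim\rho$, while the left-hand side is $\dim\pind\rho$. I expect the main obstacle to be verifying the reality of $z(\rho)$, since the adjunction argument for the $\pind$-formula depends on it, and proving it requires linking the algebraic setup to a unitary inner-product realisation.
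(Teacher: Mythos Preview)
Your argument is correct and follows essentially the same route as the paper: a direct trace computation for $\pres$, then adjointness of $\lambda^*,\lambda_*$ together with the mutual adjunction of $\pind,\pres$ for the $\pind$-formula, and finally evaluation at $1$ for part~(2). The one place where you diverge is the reality of $z(\rho)$: you set up a separate unitary argument via $\Trace(PQ)=\|QP\|_{\mathrm{HS}}^{2}$, whereas the paper simply carries the complex conjugate through the adjunction to obtain $\pind=\lambda^{*}\,\ol{z^{-1}}$ first, and then reads off $\ol{z(\rho)}=\dim\rho/\dim(\pind\rho)\in\R$ by evaluating at $1$ --- so reality and part~(2) come out in a single stroke, with no unitary structure needed.
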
 

\begin{proof} We first consider $\pres$. For each irreducible $\pi$ of $J$, and each $l\in L$, 
\[ \lambda_*(\ch_\pi)(l)= \int_U\int_{\ol{U}} \Trace( \pi(lu\ol{u}))\, \dd \ol{u}\, \dd u = \Trace(\pi(l)\pi(e_U)\pi( e_{\ol{U}})).\]

If $\pres\pi=0$, then $\pi(e_U)\pi(e_{\ol{U}})=0$, and so $\pres(\ch_\pi)=z^{-1}\lambda_*(\pi)=0$.
On the other hand, suppose that $\pres\pi=\rho$. Then
\[\lambda_*(\ch_\pi)(l)=\Trace(\pi(l)\pi(e_U)\pi(e_{\ol U}) ) = z(\rho) \Trace\left( \pi(l) z(\rho)^{-1} \pi(e_U)\pi(e_{\ol{U}})\right),\] and $z(\rho)^{-1}\pi(e_U)\pi(e_{\ol{U}})$ is a projection of $\pi$ onto $\pres\pi$ (Proposition \ref{z_proposition}). So
\[ \lambda_*(\ch_\pi)(l)= z(\rho)\Trace\left(\pi(l)\restrict_{\pres\pi}\right)=z(\pres\pi)\ch_{\pres\pi}(l),\]
giving $\pres=z^{-1}\lambda_*$.

Now turn to the map $\pind$. We consider the usual inner products on $\Class^\infty(L)$ and $\Class^\infty(J)$: 
\[ \langle \psi_1,\psi_2\rangle_L = \int_L \psi_1(l)\ol{\psi_2(l)}\ \dd l\] for $\psi_1,\psi_2\in\Class^\infty(L)$, and similarly for $J$. The characters of irreducible representations constitute orthonormal bases for $\Class^\infty(J)$ and $\Class^\infty(L)$.

A straightforward computation with Lemma \ref{Haar_lemma} shows that for each $\psi\in\Class^\infty(L)$ and $\phi\in \Class^\infty(J)$, one has $\langle \lambda^*\psi,\phi\rangle_J=\langle \psi,\lambda_*\phi\rangle_L$.
Also, $\langle \pind\psi,\phi\rangle_J= \langle \psi, \pres\phi\rangle_L$, because the functors $\pind$ and $\pres$ are adjoints. Thus the formula $\pres=z^{-1}\lambda_*$ gives, upon taking adjoints, $\pind=\lambda^*\ol{z^{-1}}$, where $\ol{z^{-1}}$ denotes the complex conjugate of $z^{-1}$.
Noting that $(\lambda^*\psi)(1)=\psi(1)$ for all $\psi\in\Class^\infty(L)$, we find 
\[\dim(\pind\rho)=\pind(\ch_\rho)(1)=\lambda^*\ol{z^{-1}}(\ch_\rho)(1)=\ol{z^{-1}(\rho)}\lambda^*(\ch_\rho)(1)=\ol{z^{-1}}\dim\rho.\]
Therefore $\ol{z(\rho)}=\dim\rho/\dim(\pind\rho)$, which is real, and (2) follows. Putting $\ol{z}=z$ into $\pind=\lambda^*\ol{z^{-1}}$ completes the proof of (1).
\end{proof}

\begin{remark}\label{dihedral_remark} The number $z(\rho)$ may be interpreted as measuring the relative position of the idempotents $e_U$ and $e_{\ol{U}}$ in the representation $\pind \rho$, as we shall now explain.

Let $\pi$ be an irreducible representation of $J$, and choose a $J$-invariant inner product on $\pi$. The self-adjoint idempotents $P=\pi(e_U)$ and $Q=\pi(e_{\ol{U}})$ determine a finite-dimensional unitary representation of the infinite dihedral group $\Gamma=(\Z/2\Z)*(\Z/2\Z)$: the generating involutions $s_1,s_2\in\Gamma$ map to the self-adjoint unitary operators $2P-1$ and $2Q-1$, respectively. This representation $\pi\restrict_{\Gamma}$  of $\Gamma$ decomposes into a direct sum of isotypical components, and each isotypical component is stable under the action of $L$.
 
Recall the list of irreducible unitary representations of $\Gamma$: for each angle $\alpha\in [0,\pi/2]$ one forms the two-dimensional representation $\tau_\alpha$ in which $P$ and $Q$ are represented by the matrices
\[\tau_{\alpha}(P)=\begin{bmatrix}1&0\\0&0\end{bmatrix},\qquad\tau_{\alpha}(Q) =\begin{bmatrix}\cos^2\alpha&\cos\alpha\sin\alpha\\ \cos\alpha\sin\alpha&\sin^2\alpha\end{bmatrix}.\]
For $\alpha\in (0,\pi/2)$, the $\tau_\alpha$ are irreducible and mutually inequivalent. The representations $\tau_0$ and $\tau_{\frac{\pi}{2}}$ each decompose into one-dimensional summands:
$\tau_0 = \tau_0' \oplus \tau_0''$ and $\tau_{\frac{\pi}{2}}= \tau_{\frac{\pi}{2}}' \oplus  \tau_{\frac{\pi}{2}}''$. These four one-dimensional representations, together with the irreducible $\tau_\alpha$, form a complete list of the irreducible unitary representations of $\Gamma$.
(The list is obtained by expressing $\Gamma$ as a semidirect product $(\Z/2\Z)\ltimes \Z$, and applying  Mackey theory \cite[Section 14]{Mackey_InducedI}.)

Now, $\pres\pi$ is the range of $PQ$, and $PQ$ is nonzero only in $\tau_0'$ and in the $\tau_\alpha$ components for $\alpha\in(0,\pi/2)$. So 
\[ \pres\pi\neq 0 \iff \pi\restrict_{\Gamma}\text{ contains }\tau_0'\text{ or }\tau_\alpha\text{ for some }\alpha\in(0,\pi/2).\] Suppose $\pres\pi\neq 0$, so that $\pi=\pind\rho$ for some irreducible $\rho$ of $L$. Since $\pres\pi$ is an irreducible representation of $L$, and $L$ preserves the isotypical decomposition of $\pi\restrict_{\Gamma}$, it follows that $\pi\restrict_{\Gamma}$ contains exactly one of the representations $\tau_0'$ or $\tau_\alpha$ (possibly with multiplicity $>1$). We then have $PQP=\cos^2(\alpha)P$ (setting $\alpha=0$ if $\pi\restrict_{\Gamma}$ contains $\tau_0'$), which by the definition of $z$ implies that
\[ z(\rho) = \cos^2(\alpha).\]

Thus the formula $z(\rho)=\dim\rho/\dim(\pind\rho)$ imposes a restriction on the irreducible representations of $\Gamma$ that may occur in irreducible representations of $J$. For example, if $L$ is commutative, so that $\dim\rho=1$ for every irreducible $\rho$, then the representation $\tau_\alpha$ of $\Gamma$ may occur only in those irreducibles $\pi$ of $J$ having $\dim\pi = 1/\cos^2(\alpha)$.
\end{remark}

\section{The Iwahori Subgroup of $\SL_2(F)$}\label{SL2_section}

Let $F$ be a $p$-adic field, with ring of integers $\mathcal O$ and maximal ideal $\mathfrak p$. Choose a generator $\varpi$ for $\mathfrak p$. We write $\mathfrak f$ for the residue field $\mathcal O/\mathfrak p$, and $q$ for the cardinality of $\mathfrak f$.

Let $G=\SL_2(F)$, and consider the standard Iwahori subgroup \cite[\S 2.2]{Iwahori-Matsumoto}
\[ J=\begin{bmatrix} \mathcal O & \mathcal O\\ \mathfrak p & \mathcal O\end{bmatrix}.\] The notation means that $J$ is the group of determinant-one matrices whose bottom-left entry lies in $\mathfrak p$, and whose other entries lie in $\mathcal O$. (Similar notation will be used throughout the paper.) $J$ admits an Iwahori decomposition $J=UL\ol{U}$, where
\[ U=\begin{bmatrix} 1&\mathcal O\\ 0& 1\end{bmatrix},\quad L=\begin{bmatrix} \mathcal O^\times & 0 \\ 0 &\mathcal O^\times\end{bmatrix},\quad \ol{U}=\begin{bmatrix} 1& 0 \\ \mathfrak p & 1\end{bmatrix}.\] Throughout this section we write $\pind$, $\pres$ and $z$ for $\pind_{U,\ol{U}}$, $\pres_{U,\ol{U}}$ and $z_{U,\ol{U}}$. 

\subsubsection*{Computations of $\pind$, $\pres$, and $z$}

Let $\rho$ be an irreducible representation of $L$; identifying $L$ with $\mathcal O^\times$ via $\left[\begin{smallmatrix} a &\\ &a^{-1}\end{smallmatrix}\right]\mapsto a$, we view $\rho$ as a smooth homomorphism $\mathcal O^\times\to\C^\times$. If $\rho$ is trivial, then $\pind  \rho$ is the trivial representation of $J$. Assume that $\rho$ is nontrivial, and let $\conductor$ denote the \emph{conductor} of $\rho$:
\[ \conductor =\min\{n\geq 1\ |\ \rho\text{ is trivial on } 1+\mathfrak p^n\}.\]
Then define $J_{\conductor}=\left[\begin{smallmatrix}\mathcal O&\mathcal O\\ \mathfrak p^{\conductor} &\mathcal O\end{smallmatrix}\right]$, and let $\rho:J_{\conductor}\to\C^\times$ be the homomorphism $\rho\left[\begin{smallmatrix} a&b\\c&d\end{smallmatrix}\right]\coloneq \rho(a)$.

\begin{proposition}\label{SL2_iz_proposition}
\begin{enumerate}[label={\rm(\arabic*)}, leftmargin=0cm,itemindent=2em,labelwidth=\itemindent,labelsep=0cm,align=left]
\item $\pind \rho\cong\ind_{J_{\conductor}}^J\rho$.
\item $\displaystyle z(\rho) = \begin{cases} 1&\text{if $\rho$ is trivial,}\\ q^{1-\conductor}&\text{if $\rho$ is nontrivial with conductor $\conductor$.}\end{cases}$
\end{enumerate}
\end{proposition}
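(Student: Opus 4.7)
The plan is to prove (2) by a direct computation of the action of the idempotents $e_U$ and $e_{\ol{U}}$ on a canonical vector in $\pi:=\ind_{J_\conductor}^J\rho$, then deduce (1) by matching dimensions. The trivial case of (2) is immediate: $\pind\triv_L=\triv_J$ by the example following Lemma \ref{i_Schur_lemma}, whence Proposition \ref{character_proposition}(2) gives $z(\triv_L)=1$. Assume henceforth that $\rho$ is nontrivial.

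First I would check that $\rho\left[\begin{smallmatrix}a&b\\c'&e\end{smallmatrix}\right]:=\rho(a)$ defines a character of $J_\conductor$: the $(1,1)$-entry of a product in $J_\conductor$ is $aa'+bc''$ with $bc''\in\mathfrak p^\conductor$, and $\rho$ is trivial on $1+\mathfrak p^\conductor$. Let $v_0\in\pi$ be the function supported on $J_\conductor$ and equal to $\rho(j_{11})$ there. The first key step is to compute $\pi^{\ol{U}}$: parametrising $J$ via the Iwahori decomposition and exploiting left $J_\conductor$-equivariance together with right $\ol{U}$-invariance forces any $f\in\pi^{\ol{U}}$ to satisfy $f(ul\ol u)=\rho(l)f(1)$, and the constraint from left action of $\ol{U}_\conductor\subset J_\conductor$ reduces, after Iwahori-decomposing $\ol u_0\cdot u$, to precisely the conductor condition on $\rho$. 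Hence $\pi^{\ol{U}}=\C f_0$ where $f_0(ul\ol u)=\rho(l)$, and a direct computation gives $l_0\cdot f_0=\rho(l_0)f_0$.

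The second key step is the Iwahori decomposition $\ol u\cdot u=u''l''\ol u''$ for $u=\left[\begin{smallmatrix}1&b\\0&1\end{smallmatrix}\right]$ and $\ol u=\left[\begin{smallmatrix}1&0\\c&1\end{smallmatrix}\right]$, in which $l''$ has diagonal entry $(1+bc)^{-1}$. Substituting into $(e_Uf_0)(j)=\int_U f_0(ju)\,du$ yields
\begin{equation*}
(e_Uf_0)(ul\ol u)=\rho(l)\int_{\mathcal O}\rho(1+bc)^{-1}\,db.
\end{equation*}
A change of variables $y=1+bc$ converts the integral to $q^{v(c)}\int_{1+\mathfrak p^{v(c)}}\rho(y)^{-1}\,dy$, which by character orthogonality and the definition of conductor equals $1$ if $c\in\mathfrak p^\conductor$ and $0$ otherwise. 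Hence $e_Uf_0=v_0$. Dually, $e_{\ol{U}}v_0\in\pi^{\ol{U}}=\C f_0$ and $(e_{\ol{U}}v_0)(1)=\vol(\ol{U}_\conductor)=q^{1-\conductor}$, so $e_{\ol{U}}v_0=q^{1-\conductor}f_0$.

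Combining, $e_Ue_{\ol{U}}v_0=q^{1-\conductor}v_0$, and the same calculation as for $f_0$ shows $v_0$ spans an $L$-subrepresentation of $\pi$ isomorphic to $\rho$; hence $z\in\Centre(L)$ acts on $v_0$ as $z(\rho)$. Proposition \ref{z_proposition} says that $z^{-1}e_Ue_{\ol{U}}$ is an idempotent, so the scalar $q^{1-\conductor}z(\rho)^{-1}$ is idempotent in $\C$ and hence equals $1$. This proves $z(\rho)=q^{1-\conductor}$, giving (2). For (1), Proposition \ref{character_proposition}(2) yields $\dim\pind\rho=z(\rho)^{-1}=q^{\conductor-1}=[J:J_\conductor]=\dim\pi$, and Proposition \ref{parahoric_basic_proposition}(2) together with $\pres\pi\cong\rho$ (established as a byproduct, since $\pres\pi=\image(e_U\colon\pi^{\ol{U}}\to\pi^U)=\C v_0$) give $\Hom_J(\pind\rho,\pi)\cong\Hom_L(\rho,\rho)=\C$. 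Irreducibility of $\pind\rho$ (Lemma \ref{i_Schur_lemma}) combined with the matching dimensions forces this nonzero map to be an isomorphism. The main technical obstacle is the character-orthogonality integral, whose vanishing pattern as $v(c)$ varies is exactly what produces the conductor in the final formulas.
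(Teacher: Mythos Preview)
Your proof is correct, but it proceeds in the opposite order from the paper and is genuinely different in strategy.

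The paper first proves (1): it observes that the image of $I_U:\pind_{\ol U}\rho\to\pind_U\rho$ lands inside $\ind_{J_\conductor}^J\rho$, and then invokes the irreducibility of $\ind_{J_\conductor}^J\rho$ (established by a Mackey computation, cited from \cite[Lemma 9.2]{Baum-Higson-Plymen_SL2}) to conclude equality. Part (2) then falls out immediately from the dimension formula $z(\rho)=\dim\rho/\dim(\pind\rho)$ of Proposition~\ref{character_proposition}(2).

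You instead compute $z(\rho)$ directly, by working inside $\pi=\ind_{J_\conductor}^J\rho$ before knowing anything about its irreducibility: the explicit calculation $e_Ue_{\ol U}v_0=q^{1-\conductor}v_0$ together with the idempotency of $z^{-1}e_Ue_{\ol U}$ pins down $z(\rho)$, and then (1) follows by matching dimensions via Proposition~\ref{character_proposition}(2) and using adjunction plus the irreducibility of $\pind\rho$ from Lemma~\ref{i_Schur_lemma}. The advantage of your route is that it is self-contained: the irreducibility of $\ind_{J_\conductor}^J\rho$ is obtained as a \emph{consequence} of the isomorphism with $\pind\rho$, rather than being imported from \cite{Baum-Higson-Plymen_SL2}. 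The paper's route is shorter because it outsources the Mackey calculation, and it makes the containment $\pind\rho\subset\ind_{J_\conductor}^J\rho$ transparent at the level of the intertwiner $I_U$; your argument gets the same containment only implicitly, via the nonzero map produced by adjunction.
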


\begin{proof} A short computation shows that the image of $I_U:\pind_{\ol{U}}\rho\to\pind_{U} \rho$ lies in the subspace $\ind_{J_{\conductor}}^J\rho$, and so $\pind \rho\subseteq\ind_{J_{\conductor}}^J\rho$. Using the Mackey formula, and the minimality of $\conductor$, one can show that $\ind_{J_{\mathfrak c}}^J\rho$ is irreducible: see \cite[Lemma 9.2]{Baum-Higson-Plymen_SL2}. This proves part (1).

For part (2), Proposition \ref{character_proposition} gives $z(\rho)=\dim(\pind\rho)^{-1}$. For nontrivial $\rho$, part (1) implies that
\[ \dim(\pind\rho)=[J:J_{\conductor}]=[\mathfrak p:\mathfrak p^{\conductor}]=q^{\conductor-1}.\qedhere\]
\end{proof}

We now turn to the functor $\pres$. Let $t=\left[\begin{smallmatrix} \varpi^{-1} & 0\\ 0& \varpi\end{smallmatrix}\right]\in G$.  
If $\pi$ is a representation of a subgroup $H$ of a group $G$, and if $g\in G$, then $\pi^g$ denotes the representation $x\mapsto \pi(gxg^{-1})$ of the group $H^g\coloneq g^{-1}Hg$. 

\begin{lemma}\label{r_asymptotic_lemma} Let $\pi$ be a smooth, finite-dimensional representation of $J$. Then
\[ \Hom_L(\pres_{U} \pi,\pres_{\ol{U}}\pi)\cong \Hom_{J\cap J^{t^n}}(\pi,\pi^{t^n})\]
for all sufficiently large $n$. If $\pi$ is irreducible, then  
\[\pres\pi=0 \iff \Hom_{J\cap J^{t^n}}(\pi,\pi^{t^n})=0\quad\text{for all } n>>0.\] 
\end{lemma}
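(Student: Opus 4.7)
The plan is to unpack the intertwining condition $T\pi(y)=\pi^{t^n}(y)T$ directly, using the Iwahori decomposition of $J\cap J^{t^n}$ together with the fact that conjugation by $t^n$ contracts $U$ while expanding $\ol U$.

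First I would compute $J\cap J^{t^n}$ explicitly. A short matrix calculation with $t=\left[\begin{smallmatrix}\varpi^{-1}&0\\0&\varpi\end{smallmatrix}\right]$ shows that $U^{t^n}\subseteq U$ (shrinking to $\{1\}$ as $n\to\infty$) and $\ol U\subseteq \ol U^{t^n}$; by uniqueness of the Iwahori decomposition this forces $J\cap J^{t^n}=U^{t^n}L\ol U$. Symmetrically, $t^n\ol Ut^{-n}$ also shrinks to $\{1\}$.

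The main step is to fix $n$ large enough that $\pi$ (being smooth and finite-dimensional) is trivial on both $U^{t^n}$ and $t^n\ol Ut^{-n}$, and then apply the intertwining relation for $T\in\Hom_{J\cap J^{t^n}}(\pi,\pi^{t^n})$ to each of the three factors of $U^{t^n}L\ol U$ in turn. On $U^{t^n}$ it becomes $T=\pi(u)T$ for every $u\in U$ (using that $t^nU^{t^n}t^{-n}=U$), forcing $\image T\subseteq\pres_U\pi$; on $\ol U$ it becomes $T\pi(\ol u)=T$, so $T$ factors through the $\ol U$-coinvariants $\pi_{\ol U}$; and on $L$ it reduces to $L$-equivariance of $T$, since $t$ centralises $L$. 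The averaging idempotent $\pi(e_{\ol U})$ supplies an $L$-equivariant isomorphism $\pi_{\ol U}\cong\pres_{\ol U}\pi$, whence $T$ corresponds bijectively to an $L$-equivariant map between $\pres_U\pi$ and $\pres_{\ol U}\pi$ (with inverse $f\mapsto f\circ\pi(e_{\ol U})$); this yields the first claim.

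For the second assertion, when $\pi$ is irreducible Lemma~\ref{r_Schur_lemma} characterises $\pres\pi=0$ precisely as the vanishing of the relevant $L$-Hom space, so combining this with the first part gives the stated equivalence. The main obstacle I expect is the bookkeeping around the two contracting conjugations—in particular, verifying that $u'\mapsto t^nu't^{-n}$ really maps $U^{t^n}$ onto all of $U$ (and analogously on the $\ol U$-side)—since this is exactly what converts the apparently tautological relation $T\cdot 1=\pi(u)T$ into the genuine constraint $\image T\subseteq\pres_U\pi$.
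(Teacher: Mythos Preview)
Your argument is correct and follows essentially the same approach as the paper: both use the Iwahori decomposition $J\cap J^{t^n}=U^{t^n}L\ol U$, the triviality of $\pi$ on the shrinking subgroups $U^{t^n}$ and $t^n\ol U t^{-n}$ for large $n$, and the fact that $t$ centralises $L$, and both invoke Lemma~\ref{r_Schur_lemma} for the second assertion. The only difference is presentational---the paper phrases things as a chain of isomorphisms of Hom-spaces (passing through $\Hom_L(\pi^{\ol U},(\pi^{t^n})^{U^n})$ and $\Hom_{L\ol U}(\pi,(\pi^{t^n})^{U^n})$), while you unpack the intertwining relation on each factor directly---but the underlying argument is the same.
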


\begin{proof} To compactify the notation, let $J^n=J\cap J^{t^n}$. Explicitly, $J^n=\left[\begin{smallmatrix} \mathcal O & \mathfrak p^{2n}\\ \mathfrak p & \mathcal O\end{smallmatrix}\right]$. This group has an Iwahori decomposition $J^n=U^n L\ol{U}$, where $U^n\coloneq U^{t^n}$. 

Since $t^n$ centralises $L$, we have an isomorphism $\pres_U\pi=\pi^U\cong (\pi^{t^n})^{U^n}$ of representations of $L$, and so
\[ \Hom_L(\pi^{\ol{U}},\pi^U)\cong \Hom_L(\pi^{\ol{U}},(\pi^{t^n})^{U^n}).\]
Because $\pi$ is smooth and finite-dimensional, the kernel of $\pi$ contains some congruence subgroup $\left[\begin{smallmatrix} 1+\mathfrak p^r &\mathfrak p^{r} \\ \mathfrak p^{r}& 1+\mathfrak p^r\end{smallmatrix}\right]$. Clearly $U^n$ lies in this subgroup for sufficiently large $n$, as does $\ol{U}^{t^{-n}}$. So, for sufficiently large $n$, $\pi$ is trivial on $U^n$, while $\pi^{t^n}$ is trivial on $\ol{U}$. We thus have for large $n$ that 
\[\Hom_L(\pi^{\ol U},(\pi^{t^n})^{U^n})\cong
\Hom_{L\ol{U}}(\pi, (\pi^{t^n})^{U^n})\cong\Hom_{U^n L\ol U}(\pi,\pi^{t^n}).\]
The second assertion follows immediately from the first and Lemma \ref{r_Schur_lemma}.
\end{proof}

\begin{proposition}\label{r_Hecke_proposition} Let $\pi$ be an irreducible representation of $J$. Then
\[ \pres\pi=0 \iff \dim\left(\End_G(\ind_J^G\pi)\right)<\infty.\]
\end{proposition}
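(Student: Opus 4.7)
My plan is to combine Mackey's formula with an asymptotic analysis of each double-coset contribution, extending the technique of Lemma~\ref{r_asymptotic_lemma} to every element of the affine Weyl group of $\SL_2$.

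By Frobenius reciprocity and Mackey's decomposition,
\[
\End_G(\ind_J^G\pi)\;\cong\;\bigoplus_{g\in J\backslash G/J}\Hom_{J\cap J^g}(\pi,\pi^g),
\]
where $\pi^g(x)=\pi(gxg^{-1})$. The Iwahori-Matsumoto presentation identifies $J\backslash G/J$ with the extended affine Weyl group of $\SL_2$, which is the infinite dihedral group with representatives $\{t^n,\,wt^n:n\in\Z\}$, where $w=\left[\begin{smallmatrix}0&-1\\1&0\end{smallmatrix}\right]$. The proposition thus amounts to controlling how many of these summands are nonzero.

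The direction $\dim<\infty\Rightarrow\pres\pi=0$ is immediate from Lemma~\ref{r_asymptotic_lemma}: if $\pres\pi\neq 0$ then $\Hom_{J\cap J^{t^n}}(\pi,\pi^{t^n})\neq 0$ for every $n\gg 0$, yielding infinitely many nonzero summands indexed by distinct double cosets. For the converse, assume $\pres\pi=0$. The $t^n$-summands vanish for $|n|\gg 0$ by combining Lemmas~\ref{r_asymptotic_lemma} and~\ref{r_Schur_lemma}, appealing to Proposition~\ref{parahoric_basic_proposition}(1) to handle the $n<0$ side. For the $wt^n$-summands I would carry out an analogous asymptotic calculation: the intersection $J\cap J^{wt^n}$ takes the form $U_{1+2|n|}L\ol U$; conjugation by $wt^n$ swaps the small unipotent factor $U_{1+2|n|}$ with $\ol U$ and acts on $L$ by $l\mapsto l^{-1}$; so, by smoothness of $\pi$, the Hom reduces for $|n|\gg 0$ to $\Hom_L(\pres_{\ol U}\pi,(\pres_{\ol U}\pi)^w)$, together with the $U$-analog for the opposite sign of $n$, where the superscript denotes the Weyl twist on $L$-representations.

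The principal obstacle is verifying that these Weyl-twisted $\Hom_L$ spaces vanish under the hypothesis $\pres\pi=0$. The cleanest route is a companion structural lemma asserting that for irreducible $\pi$ of the Iwahori of $\SL_2$, the condition $\pres\pi=0$ actually forces $\pres_U\pi=\pres_{\ol U}\pi=0$. Such a lemma should follow from Lemma~\ref{i_Schur_lemma}: any nonzero $L$-constituent $\rho$ of $\pi^{\ol U}$ would embed $\pi$ into $\pind_{\ol U}\rho$, and a Schur-type argument---exploiting that the irreducibles of the Iwahori of $\SL_2$ split cleanly into the ``principal series'' family $\{\pind\rho\}_\rho$ and a cuspidal remainder---should force $\pi\cong\pind\rho$, contradicting $\pres\pi=0$. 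With this lemma in hand, both Weyl-twisted summands are trivially zero and the converse direction is complete.
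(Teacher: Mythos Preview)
Your overall strategy coincides with the paper's: both use the Mackey decomposition over $J\backslash G/J$, handle the $t^n$ cosets via Lemma~\ref{r_asymptotic_lemma} (together with the duality $\Hom_{J\cap J^{t^n}}(\pi,\pi^{t^n})\cong\Hom_{J^{t^{-n}}\cap J}(\pi^{t^{-n}},\pi)$ for $n<0$), and then treat the reflection cosets separately. Your asymptotic computation for the latter is correct: for $|n|\gg 0$ the intertwining space does reduce to $\Hom_L(\pres_{\ol U}\pi,(\pres_{\ol U}\pi)^w)$ for one sign of $n$ and to $\Hom_L(\pres_U\pi,(\pres_U\pi)^w)$ for the other.

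The gap is your ``companion structural lemma''. The claim that $\pres\pi=0$ forces $\pres_U\pi=\pres_{\ol U}\pi=0$ is \emph{false} for the Iwahori subgroup of $\SL_2$. Take $q$ odd, fix a nontrivial character $\psi$ of $N(\mathfrak f)$ and a character $\epsilon$ of $\{\pm 1\}\subset M(\mathfrak f)$, and set $\pi=\ind_{\{\pm 1\}N(\mathfrak f)}^{B(\mathfrak f)}(\epsilon\boxtimes\psi)$, an irreducible representation of $B(\mathfrak f)$ of dimension $(q-1)/2$; inflate $\pi$ to $J$ along the quotient $J\to B(\mathfrak f)$. Since $\ol U$ lies in the kernel of this quotient one has $\pres_{\ol U}\pi=\pi\neq 0$, while $\pres_U\pi=\pi^{N(\mathfrak f)}=0$; thus $\pres\pi=0$ but $\pres_{\ol U}\pi\neq 0$. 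Moreover $\pi|_L\cong\ind_{\{\pm1\}}^{M(\mathfrak f)}\epsilon$ is self-dual under the Weyl twist, so $\Hom_L(\pres_{\ol U}\pi,(\pres_{\ol U}\pi)^w)\neq 0$ as well---precisely the obstruction you flagged. Your sketched proof of the lemma breaks at the expected point: an embedding $\pi\hookrightarrow\pind_{\ol U}\rho$ only says that $\pi$ is \emph{some} constituent of $\pind_{\ol U}\rho$, and Lemma~\ref{i_Schur_lemma} singles out $\pind_{U,\ol U}\rho$ as just one among many; there is no clean ``principal series versus cuspidal'' dichotomy for the irreducibles of $J$ that would force $\pi\cong\pind_{U,\ol U}\rho$.

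The paper does not take this route. It dispatches the $t^nw$ cosets in a single sentence---``a small modification of Lemma~\ref{r_asymptotic_lemma}''---without spelling out what the modification yields. So your attempt is more explicit than the paper's at exactly this step, but the mechanism you propose to close it does not work; whatever argument is intended, it cannot proceed via the vanishing of $\pres_U\pi$ and $\pres_{\ol U}\pi$ separately.
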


\begin{proof} The Mackey formula gives
\[ \End_G(\ind_J^G\pi)\cong \bigoplus_{g\in J\backslash G/J} \Hom_{J\cap J^g}(\pi,\pi^g).\]
Let $w=\left[\begin{smallmatrix} 0 & -1\\ 1& 0\end{smallmatrix}\right]$. According to the Bruhat decomposition, $\{ t^n,\ t^nw\ |\ n\in\Z\}$ is a set of representatives for the double-coset space $J\backslash G /J$  \cite[II.1.7]{Serre_Trees}. 

If $\pres\pi\neq 0$, then  Lemma \ref{r_asymptotic_lemma} ensures that the space $\Hom_{J\cap J^{t^n}}(\pi,\pi^{t^n})$ is nonzero for all $n>>0$. Thus $\End_G(\ind_J^G\pi)$ is infinite-dimensional in this case.

For the converse, suppose that $\pres \pi=0$. Lemma \ref{r_asymptotic_lemma} implies that the cosets $Jt^n J$, for $n\geq 0$, contribute only finitely many dimensions to $\End_G(\ind_J^G\pi)$. Since $\Hom_{J\cap J^{t^n}}(\pi,\pi^{t^n})\cong \Hom_{J^{t^{-n}}\cap J}(\pi^{t^{-n}},\pi)$, the same is true for $n\leq 0$. A small modification of Lemma \ref{r_asymptotic_lemma} shows that the contribution of the double cosets $Jt^nwJ$ is likewise finite-dimensional.
\end{proof}

\begin{remark} The vanishing of $\pres\pi$ does not guarantee that $\ind_J^G\pi$ is a supercuspidal representation of $G$. For example, consider the groups 
$B(\mathfrak f)=\left[\begin{smallmatrix} \mathfrak f^\times &\mathfrak f{\phantom{^\times}}\\ 0{\phantom{^\times}}&\mathfrak f^\times\end{smallmatrix}\right]$ and $N(\mathfrak f)=\left[\begin{smallmatrix} 1&\mathfrak f\\ 0 & 1\end{smallmatrix}\right]$, and let
$\psi$ be a nontrivial one-dimensional representation of $N(\mathfrak f)$. Since $B(\mathfrak f)$ is a quotient of $J$, the representation $\pi=\ind_{N(\mathfrak f)}^{B(\mathfrak f)}\psi$  may be inflated to a representation of $J$. A Mackey-formula computation shows that $\pi^U=\pi^{N(\mathfrak f)}=0$, and so $\pres\pi=0$.
Now, $\pi$ does contain a nonzero vector fixed by the diagonal subgroup $M(\mathfrak f)$: namely, the function 
$f\left(\left[\begin{smallmatrix} x& y\\ 0 & x^{-1}\end{smallmatrix}\right] \right)= \psi(xy)$. The quotient map $J\to B(\mathfrak f)$ sends $J\cap J^w$ onto $M(\mathfrak f)$, and so we have $\pi^{J\cap J^w}\neq 0$. An application of the Mackey formula then gives $(\ind_J^G\pi)^J\neq 0$, and so $\ind_J^G\pi$ has a nonzero summand in the unramified principal series \cite[Lemma 4.7]{Borel_Iwahori}.

It is true, on the other hand, that the cuspidality of $\ind_J^G\pi$ implies $\pres\pi=0$: indeed, if $\pres\pi=\rho\neq 0$, then the pair $(J,\pi)$ is a type for the (non-cuspidal) Bernstein component $[M,\rho]_G$ of $G$ \cite{Kutzko_SL2}.
\end{remark}

\subsubsection*{Parahoric Induction}\label{parahoric_induction_section}

We continue to consider the Iwahori subgroup $J\subset \SL_2(F)$, with its decomposition $J=UL\ol{U}$. Let $K=\SL_2(\mathcal O)$, and define a functor \[\pind_{U,\ol{U}}^K:\Mod (L)\to\Mod (K),\qquad \pind_{U,\ol{U}}^K = \ind_J^K\pind_{U,\ol{U}}.\]
This is an example of \emph{parahoric induction}; see \cite{Dat_Finitude} for the general definition.

The family of representations $\pind_{U,\ol{U}}^K\rho$, as $\rho$ ranges over the irreducibles of $L$, may be considered a kind of principal series for $K$. We will show that the irreducibility and intertwining properties of these representations are exactly analogous to those of the principal series for $\SL_2(F)$ (as explained in \cite{GGP-S}, for instance).

\begin{lemma}\label{Mackey_lemma} Suppose that $I$ and $I'$ are closed subgroups of a compact totally disconnected group, having Iwahori decompositions $I=WM\ol{W}$ and $I'=VM\ol{V}$, where $V\subseteq W$ and $\ol{W}\subseteq \ol{V}$. Then $\Hom_{I\cap I'}(\pind_{W,\ol{W}}\rho,\pind_{V,\ol{V}}\tau)\cong \Hom_M(\rho,\tau)$ for all $\rho,\tau\in\Mod(M)$.
\end{lemma}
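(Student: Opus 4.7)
The plan is to show that $\pind_{W,\overline{W}}^I\rho$ and $\pind_{V,\overline{V}}^{I'}\tau$, after restriction to $I \cap I'$, are both isomorphic to a common parahoric induction on $I \cap I'$, and then apply Proposition \ref{parahoric_basic_proposition}. A preliminary observation is that $(V, M, \overline{W})$ is an Iwahori decomposition of $I \cap I'$: the inclusion $VM\overline{W} \subseteq I \cap I'$ is immediate from the hypotheses, and for the reverse inclusion one compares the decompositions of $x \in I \cap I'$ in $I$ and in $I'$, using the transversality built into the Iwahori decomposition of $I'$ (concretely, an element $v_2^{-1}w_1$ ends up in $W \cap M\overline{V} = \{1\}$) to conclude that the $W$-factor of $x$ lies in $V$ and the $\overline{V}$-factor of $x$ lies in $\overline{W}$.

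The heart of the argument is to establish the two restriction formulas
$$\res^I_{I\cap I'}\pind_{W,\overline{W}}^I\rho \;\cong\; \pind_{V,\overline{W}}^{I\cap I'}\rho, \qquad \res^{I'}_{I\cap I'}\pind_{V,\overline{V}}^{I'}\tau \;\cong\; \pind_{V,\overline{W}}^{I\cap I'}\tau.$$
For the first, I would use the image description $\pind_{W,\overline{W}}^I\rho = \image(I_W) \subseteq \pind_W^I\rho = \ind_{MW}^I\rho$. The single-double-coset identity $I = (I \cap I')\cdot MW$ (verified by writing any $x \in I$ as $x = m\overline{w}\cdot w$ with $m\overline{w} \in M\overline{W} \subseteq I \cap I'$), together with $(I \cap I')\cap MW = MV$, lets Mackey's formula identify $\res^I_{I\cap I'}\pind_W^I\rho \cong \ind_{MV}^{I\cap I'}\rho = \pind_V^{I\cap I'}\rho$ via restriction of functions. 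Under this identification, one then checks that the subspace $\pind_{W,\overline{W}}^I\rho$ corresponds to $\pind_{V,\overline{W}}^{I\cap I'}\rho = \image(I_V) \subseteq \pind_V^{I\cap I'}\rho$. The second formula is symmetric: use $\pind_{V,\overline{V}}^{I'}\tau \subseteq \pind_{\overline{V}}^{I'}\tau = \ind_{M\overline{V}}^{I'}\tau$, the identity $I' = (I\cap I')\cdot M\overline{V}$ with intersection $M\overline{W}$, and the image description via $I_{\overline{V}}$ together with Proposition \ref{parahoric_basic_proposition}(1).

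Granted the restriction formulas, Proposition \ref{parahoric_basic_proposition}(2)--(3) completes the argument:
$$\Hom_{I\cap I'}\bigl(\pind_{V,\overline{W}}^{I\cap I'}\rho,\, \pind_{V,\overline{W}}^{I\cap I'}\tau\bigr) \;\cong\; \Hom_M\bigl(\rho,\, \pres_{V,\overline{W}}^{I\cap I'}\pind_{V,\overline{W}}^{I\cap I'}\tau\bigr) \;\cong\; \Hom_M(\rho, \tau).$$
The main obstacle is the image-matching step: verifying that, under the Mackey isomorphism, $\{I_W^I(g)|_{I\cap I'} : g \in \pind_{\overline{W}}^I\rho\}$ coincides with $\{I_V^{I\cap I'}(h) : h \in \pind_{\overline{W}}^{I\cap I'}\rho\}$ as subspaces of $\pind_V^{I\cap I'}\rho$. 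My approach would be a direct integration computation, decomposing $\int_W = \int_{W/V}\int_V$ and exploiting the left $M\overline{W}$-equivariance of $g$; equivalently, a bimodule manipulation using the identity $e_{\overline{W}}e_W = e_{\overline{W}}e_Ve_W$ together with the identification $\H(I) \cong \H(I \cap I')\otimes_{\H(MV)}\H(MW)$ that arises from the decomposition $I = (I \cap I')\cdot MW$.
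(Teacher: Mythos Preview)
Your overall strategy is sound, and the restriction formulas you aim for are in fact true; once you have them, Proposition~\ref{parahoric_basic_proposition} finishes the job exactly as you say. But the paper takes a different and rather slicker route that sidesteps the image-matching step entirely. Rather than proving that $\res^I_H\pind_{W,\ol{W}}\rho$ is \emph{isomorphic} to $\pind_{V,\ol{W}}^H\rho$, the paper only shows that the latter is a \emph{quotient} of the former (via a commuting square with $I_{\ol W}$ horizontally and restriction vertically), and symmetrically for $\tau$. This yields an injection $\Hom_M(\rho,\tau)\cong\Hom_H(\pind_{V,\ol W}\rho,\pind_{V,\ol W}\tau)\hookrightarrow\Hom_H(\pind_{W,\ol W}\rho,\pind_{V,\ol V}\tau)$. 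In the other direction, the paper uses the embeddings $\pind_{W,\ol W}\rho\subseteq\pind_W\rho\cong\pind_V^H\rho$ and $\pind_{V,\ol V}\tau\subseteq\pind_{\ol V}\tau\cong\pind_{\ol W}^H\tau$, together with Frobenius reciprocity as in Lemma~\ref{Centre_lemma}, to get an injection $\Hom_H(\pind_{W,\ol W}\rho,\pind_{V,\ol V}\tau)\hookrightarrow\Hom_M(\rho,\tau)$. Finite-dimensionality then forces both injections to be isomorphisms.

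On your image-matching step: the inclusion $\image(I_V^H)\subseteq\image(I_W^I)|_H$ does fall out of extension-by-zero, essentially as you describe. The reverse inclusion is where your sketch is thin. Decomposing $\int_W$ over $W/V$ and invoking the $M\ol W$-equivariance of $g$ does not obviously simplify $g(w_\alpha v h)$ for a general coset representative $w_\alpha$, since $w_\alpha$ sits on the wrong side. A clean way to close the gap is to observe that $e_W e_V=e_W$ forces $\tilde z_{W,\ol W}^I=\tilde z_{V,\ol W}^H$ in $\Centre(M)$ (compare $e_We_{\ol W}e_W$ with $e_We_Ve_{\ol W}e_Ve_W$), whence by Proposition~\ref{character_proposition}(2) the two sides have equal dimension for each irreducible $\rho$, and the already-established inclusion is an equality. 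So your route does go through, and it yields the stronger restriction formula as a byproduct; the paper's sandwich argument is simply shorter and avoids this extra computation.
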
 

\begin{proof} Let $H=I\cap I'$. This group has an Iwahori decomposition $H=YM\ol{Y}$, where $Y=V$ and $\ol{Y}=\ol{W}$. (We write $Y$ and $\ol{Y}$ in an attempt to avoid ambiguity in the notation; so, for example, $\pind_Y$ is a functor from $\Mod(M)$ to $\Mod(H)$, while $\pind_V$ is a functor from $\Mod(M)$ to $\Mod(I')$.)

Restriction of functions from $I$ to $H$ gives an $H$-equivariant isomorphism ${\pind_{W}} {\rho}\xrightarrow{\cong}\pind_{Y}\rho$; similarly $\pind_{\ol{V}}\tau\xrightarrow{\cong} \pind_{\ol{Y}}\tau$. Embedding $\pind_{W,\ol{W}}\rho\subseteq \pind_{W}\rho$ and $\pind_{V,\ol{V}}\tau\subseteq \pind_{\ol{V}}\tau$, we obtain an injective map
\begin{equation}\label{Mackey_proof_eq1} \Hom_H(\pind_{W,\ol{W}}\rho,\pind_{V,\ol{V}}\tau)\into \Hom_H(\pind_Y\rho,\pind_{\ol{Y}}\tau)\xrightarrow{\cong} \Hom_M(\rho,\tau);\end{equation}
the last isomorphism holds by Frobenius reciprocity, as in Lemma \ref{Centre_lemma}.

On the other hand, restriction of functions from $I$ to $H$ gives a surjective, $H$-equivariant map $\pind_{\ol{W}}\rho\to \pind_{\ol{Y}}\rho$ making the diagram
\[
\xymatrix{
\pind_W\rho \ar[r]^-{I_{\ol{W}}} \ar[d]_-{\text{restrict}} & \pind_{\ol{W}}\rho \ar[d]^-{\text{restrict}}\\
\pind_Y\rho \ar[r]^-{I_{\ol{Y}}} & \pind_{\ol{Y}}\rho
}
\]
commute. This diagram exhibits $\pind_{Y,\ol{Y}}\rho$ as a quotient of $\pind_{W,\ol{W}}\rho$; a similar argument shows that $\pind_{Y,\ol{Y}}\tau$ is a quotient of $\pind_{V,\ol{V}}\tau$. We therefore have an injective map
\begin{equation}\label{Mackey_proof_eq2} \Hom_M(\rho,\tau)\xrightarrow{\cong} \Hom_H(\pind_{Y,\ol{Y}}\rho,\pind_{Y,\ol{Y}}\tau)\into \Hom_H(\pind_{W,\ol{W}}\rho,\pind_{V,\ol{V}}\tau);\end{equation}
the first isomorphism holds by Proposition \ref{parahoric_basic_proposition}.

Since $\Hom_M(\rho,\tau)$ is finite-dimensional when $\rho$ and $\tau$ are, the injective maps \eqref{Mackey_proof_eq1} and \eqref{Mackey_proof_eq2} are isomorphisms.
\end{proof}

Applied to the Iwahori subgroup in $\SL_2(F)$, Lemma \ref{Mackey_lemma} gives the following Mackey-type formula for parahoric induction and restriction. (The corresponding formula in the general case is the subject of ongoing work with Ehud Meir and Uri Onn.)

\begin{lemma}\label{SL2_Mackey_lemma} Let $\rho$ and $\tau$ be representations of $L$. Then 
\[ \Hom_K(\pind_{U,\ol{U}}^K\rho,\pind_{U,\ol{U}}^K\tau)\cong \Hom_L(\rho,\tau)\oplus \Hom_L(\rho,\tau^w).\]
\end{lemma}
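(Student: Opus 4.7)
The plan is to combine Frobenius reciprocity with the Mackey formula for the intertwining space $\Hom_K(\ind_J^K{-},\ind_J^K{-})$. The Bruhat decomposition of $\SL_2(\mathfrak{f})$ shows that $J\backslash K/J=\{J,JwJ\}$, so Mackey decomposes the intertwining space as
\[\Hom_J(\pind_{U,\ol{U}}\rho,\pind_{U,\ol{U}}\tau)\oplus\Hom_{J\cap J^w}\bigl(\pind_{U,\ol{U}}\rho,(\pind_{U,\ol{U}}\tau)^w\bigr),\]
and it remains to identify these two summands as $\Hom_L(\rho,\tau)$ and $\Hom_L(\rho,\tau^w)$ respectively.

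The first summand is immediate from Proposition \ref{parahoric_basic_proposition}: the adjunction $\pind_{U,\ol{U}}\dashv\pres_{U,\ol{U}}$ combined with $\pres_{U,\ol{U}}\pind_{U,\ol{U}}\cong\id_L$ gives $\Hom_J(\pind_{U,\ol{U}}\rho,\pind_{U,\ol{U}}\tau)\cong\Hom_L(\rho,\tau)$. For the second summand, a direct matrix computation yields $J^w=w^{-1}Jw=\left[\begin{smallmatrix}\mathcal{O}&\mathfrak{p}\\ \mathcal{O}&\mathcal{O}\end{smallmatrix}\right]$, which carries its own Iwahori decomposition $J^w=VL\ol{V}$ with $V=\left[\begin{smallmatrix}1&\mathfrak{p}\\ 0&1\end{smallmatrix}\right]\subseteq U$ and $\ol{V}=\left[\begin{smallmatrix}1&0\\ \mathcal{O}&1\end{smallmatrix}\right]\supseteq\ol{U}$---precisely the containment hypotheses of Lemma \ref{Mackey_lemma}. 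I then identify the twisted module: conjugation by $w$ interchanges $(U,L,\ol{U})$ with $(\ol{V},L,V)$ and acts on $L$ by $l\mapsto l^{-1}$, so $(\pind_{U,\ol{U}}\tau)^w\cong\pind_{\ol{V},V}^{J^w}(\tau^w)\cong\pind_{V,\ol{V}}^{J^w}(\tau^w)$, where $\tau^w(l)=\tau(wlw^{-1})$ and the last isomorphism is Proposition \ref{parahoric_basic_proposition}(1). Lemma \ref{Mackey_lemma} then supplies the desired isomorphism $\Hom_{J\cap J^w}(\pind_{U,\ol{U}}\rho,\pind_{V,\ol{V}}^{J^w}\tau^w)\cong\Hom_L(\rho,\tau^w)$.

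The main obstacle is justifying the conjugation-compatibility formula $(\pind_{U,\ol{U}}\tau)^w\cong\pind_{\ol{V},V}^{J^w}(\tau^w)$ in a way that avoids messy bookkeeping at the level of Hecke-algebra tensor products. The cleanest route appears to be to reduce to irreducible $\tau$ via the additivity of both functors and then invoke Lemma \ref{i_Schur_lemma}: conjugation by $w$ carries $\pind_U\tau$ and $\pind_{\ol{U}}\tau$ to $\pind_{\ol{V}}^{J^w}\tau^w$ and $\pind_V^{J^w}\tau^w$ respectively, so the uniqueness statement in Lemma \ref{i_Schur_lemma} forces $(\pind_{U,\ol{U}}\tau)^w$ to coincide with the common irreducible $\pind_{\ol{V},V}^{J^w}\tau^w$. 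Everything else amounts to careful tracking of which Iwahori decomposition sits inside which.
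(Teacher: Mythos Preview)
Your proof is correct and follows the same route as the paper: Mackey decomposition over $J\backslash K/J=\{J,JwJ\}$, Proposition~\ref{parahoric_basic_proposition} for the first summand, and Lemma~\ref{Mackey_lemma} for the second after identifying $(\pind_{U,\ol{U}}\tau)^w$ with the parahoric induction for the conjugate Iwahori decomposition of $J^w$. The paper writes this last step as $(\pind_{U,\ol{U}}\tau)^w\cong\pind_{U^w,\ol{U}^w}(\tau^w)$ and invokes Lemma~\ref{Mackey_lemma} directly; you make the same move but spell out the containments $V=\ol{U}^w\subseteq U$ and $\ol{U}\subseteq U^w=\ol{V}$ and the swap via Proposition~\ref{parahoric_basic_proposition}(1) that are implicit in the paper's application of the lemma.

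One small comment: your detour through irreducible $\tau$ and Lemma~\ref{i_Schur_lemma} to justify $(\pind_{U,\ol{U}}\tau)^w\cong\pind_{U^w,\ol{U}^w}(\tau^w)$ is more than is needed. Conjugation by $w$ is a group isomorphism $J\to J^w$ carrying $(U,L,\ol{U})$ to $(U^w,L,\ol{U}^w)$, so it intertwines the maps $I_U$ and $I_{U^w}$ (or, equivalently, carries the bimodule $\H(J)e_{\ol{U}}e_U$ to $\H(J^w)e_{\ol{U}^w}e_{U^w}$); the identification is then immediate from the definition, for arbitrary $\tau$, without any appeal to semisimplicity.
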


\begin{proof} Using the Mackey formula and the Bruhat decomposition $K=J\sqcup JwJ$, we find
\[ \Hom_K(\pind_{U,\ol{U}}^K\rho,\pind_{U,\ol{U}}^K\tau)\cong \Hom_J(\pind_{U,\ol{U}}\rho,\pind_{U,\ol{U}}\tau)\oplus \Hom_{J\cap J^w} (\pind_{U,\ol{U}}\rho, (\pind_{U,\ol{U}}\tau)^w).\]
The first summand is isomorphic to $\Hom_L(\rho,\tau)$, by Proposition \ref{parahoric_basic_proposition}. We have $(\pind_{U,\ol{U}}\tau)^w\cong \pind_{U^w,\ol{U}^w}(\tau^w)$, and so Lemma \ref{Mackey_lemma} implies that the second summand is isomorphic to $\Hom_L(\rho,\tau^w)$.
\end{proof}

An application of Schur's lemma then gives:

\begin{proposition}\label{principal_proposition} Let $\rho$ and $\rho'$ be irreducible representations of $L$.
\begin{enumerate}[label={\rm(\arabic*)}, leftmargin=0cm,itemindent=2em,labelwidth=\itemindent,labelsep=0cm,align=left]
\item $\pind_{U,\ol{U}}^K\rho$ is irreducible if and only if $\rho\not\cong \rho^w$.
\item If $\rho\cong\rho^w$, then $\pind_{U,\ol{U}}^K\rho$ is a sum of two inequivalent irreducibles.
\item $\pind_{U,\ol{U}}^K\rho \cong\pind_{U,\ol{U}}^K\rho^w$.
\item $\Hom_K(\pind_{U,\ol{U}}^K\rho, \pind_{U,\ol{U}}^K\rho')=0$\ \ if\ \ $\rho'\not\cong\rho$ or $\rho^w$. \hfill\qed
\end{enumerate}
\end{proposition}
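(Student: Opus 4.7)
The proof is a direct consequence of Lemma \ref{SL2_Mackey_lemma} together with Schur's lemma, once one observes that $\pind_{U,\ol{U}}^K\rho$ is finite dimensional (since $J_{\mathfrak c}$ has finite index in $J$ and $J$ has finite index in $K$) and thus semisimple (since $K$ is compact). The plan is to apply the Mackey-type formula with each of the choices $\tau=\rho$, $\tau=\rho^w$, and $\tau=\rho'$ in turn.

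\textbf{Parts (1) and (2).} Taking $\tau=\rho$ in Lemma \ref{SL2_Mackey_lemma} gives
\[ \End_K\bigl(\pind_{U,\ol{U}}^K\rho\bigr)\cong \End_L(\rho)\oplus \Hom_L(\rho,\rho^w).\]
By Schur, the first summand is one-dimensional, and the second has dimension $0$ or $1$ according as $\rho\not\cong\rho^w$ or $\rho\cong\rho^w$. Writing a semisimple representation as $\bigoplus n_i V_i$ with distinct irreducibles $V_i$, its endomorphism algebra has dimension $\sum n_i^2$. A one-dimensional endomorphism algebra forces a single irreducible summand (proving (1)), while a two-dimensional endomorphism algebra forces $V_1\oplus V_2$ with $V_1\not\cong V_2$ (proving (2)).

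\textbf{Part (3).} Now set $\tau=\rho^w$. Since $w^2=-I$ acts trivially by conjugation on $L$, we have $(\rho^w)^w\cong \rho$, so Lemma \ref{SL2_Mackey_lemma} yields
\[ \Hom_K\bigl(\pind_{U,\ol{U}}^K\rho,\pind_{U,\ol{U}}^K\rho^w\bigr)\cong \Hom_L(\rho,\rho^w)\oplus \Hom_L(\rho,\rho),\]
which is nonzero because the second summand is one-dimensional. If $\rho\cong\rho^w$ there is nothing to prove; otherwise both representations are irreducible by (1), and any nonzero intertwiner between irreducibles is an isomorphism.

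\textbf{Part (4).} Finally, with $\tau=\rho'$ and $\rho'\not\cong \rho,\rho^w$, Lemma \ref{SL2_Mackey_lemma} reads
\[ \Hom_K\bigl(\pind_{U,\ol{U}}^K\rho,\pind_{U,\ol{U}}^K\rho'\bigr)\cong \Hom_L(\rho,\rho')\oplus \Hom_L(\rho,(\rho')^w).\]
The first summand vanishes by Schur. Using once more that $w^2$ acts trivially on $L$, $\Hom_L(\rho,(\rho')^w)\neq 0$ would force $\rho\cong (\rho')^w$, i.e.\ $\rho^w\cong \rho'$, contrary to assumption.

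There is no serious obstacle: the only point requiring a moment's thought is the arithmetic of endomorphism algebras in (2), which rules out the possibility that $\pind_{U,\ol{U}}^K\rho$ is a multiplicity-two copy of a single irreducible (that would give $\dim\End_K=4$, not $2$). Everything else is bookkeeping with the involution $\rho\mapsto\rho^w$ and the identity $(\rho^w)^w\cong\rho$.
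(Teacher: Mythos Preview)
Your proof is correct and is exactly the argument the paper has in mind: the paper's own proof is the one-line remark ``An application of Schur's lemma then gives'' following Lemma \ref{SL2_Mackey_lemma}, and you have simply spelled out that application. The observations about semisimplicity and the arithmetic $\sum n_i^2=2$ in part (2) are the natural way to fill in the details the paper omits.
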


\section{Parahoric Induction and Chamber Homology for $\SL_2(F)$}\label{chamber_section}

\subsubsection*{Background}

Keep the notation $G$, $J$, $K$, $L$, $U$, $\ol{U}$, $F$, etc., from the previous section. We also set 
\[M=\begin{bmatrix} F & 0\\ 0 & F\end{bmatrix},\ N=\begin{bmatrix} 1& F\\ 0 & 1\end{bmatrix},\ \overline{N}= \begin{bmatrix} 1& 0\\ F&1\end{bmatrix},\ P=\begin{bmatrix}F & F\\ 0 & F\end{bmatrix},\ K'=\begin{bmatrix} \mathcal O & \mathfrak p^{-1}\\ \mathfrak p & \mathcal O\end{bmatrix},\]
\[V=\begin{bmatrix} 1 & \mathfrak p^{-1}\\ 0 & 1\end{bmatrix},\  w=\begin{bmatrix} 0&-1\\ 1&0\end{bmatrix},\ t=\begin{bmatrix} \varpi^{-1} & 0 \\ 0 &\varpi\end{bmatrix},\ \text{and } W=\{1,w\}/\pm 1\]
($\mathfrak p^{-1}$ means $\varpi^{-1}\mathcal O$). We consider the normalised Jacquet functors $\pind_M^G$ and $\pres^G_M$ of parabolic induction and Jacquet restriction along $P$ \cite[VI.1]{Renard}. 

Work of Bernstein \cite{Bernstein-Deligne} and Keller \cite{Keller} implies that the Hochschild homology groups $\HH_*(\H(G))$ and $\HH_*(\H(M))$ may be defined in terms of the categories of finitely generated modules over $\H(G)$ and $\H(M)$, respectively: see \cite{Crisp_compact}. The Jacquet functors preserve the subcategories of finitely generated modules in $\Mod(G)$ and $\Mod(M)$, and so they induce natural maps between $\HH_*(\H(G))$ and $\HH_*(\H(M))$.

We let $G_c$ denote the union of the compact subgroups of $G$. This set is open, closed, and conjugation-invariant in $G$, and so it determines a direct-summand $\HH_*(\H(G))_c$ of $\HH_*(\H(G))$: see \cite{Blanc-Brylinski}. The map $\pres^G_M$ sends $\HH_*(\H(G))_c$ to $\HH_*(\H(M))_c$, and the map $\pind_M^G$ sends $\HH_*(\H(M))_c$ to $\HH_*(\H(G))_c$ \cite[Corollaries 3.12 and 3.19]{Crisp_compact}. 

The Bruhat-Tits building $X$ of $G$ is an infinite, locally finite, connected tree, on which $G$ acts properly, simplicially, and without inversions \cite[II.1]{Serre_Trees}. The action is transitive on the set $X^1$ of edges, and has two orbits in the vertex-set $X^0$. The Iwahori subgroup $J$ is the isotropy group of an edge, whose vertices have isotropy groups $K$ and $K'$. The chamber homology $\h_*^G(X)$ of $X$ is, by definition, the homology of the following chain complex \cite{Baum-Higson-Plymen_SL2}:
\begin{equation}\tag*{$C_*^G(X)$:}
R(J)\xrightarrow{\partial_G} R(K)\oplus R(K')\qquad \partial_G(\pi)=\ind_J^K\pi\oplus -\ind_J^{K'}\pi
\end{equation}

The building $Y$ of $M$ identifies with an apartment (i.e., a line) in $X$. With respect to the decomposition $M\cong L\times \langle t\rangle$, $L$ acts trivially on $Y$ while $t$ translates the $i$th vertex to the $(i+2)$nd. The chamber homology $\h_*^M(Y)$ of $Y$ is the homology of the chain complex
\begin{equation}
\tag*{$C_*^M(Y)$:} R(L)\oplus R(L) \xrightarrow{\partial_M} R(L)\oplus R(L)\quad \partial_M(\rho_0,\rho_1)=(\rho_0+\rho_1,-\rho_0-\rho_1)
\end{equation}
In pictures, showing the (oriented) quotient complexes $Y/M$ and $X/G$ labelled by their respective coefficient systems:
\begin{flalign*}& \xymatrix@C=40pt@R=80pt{ 
Y/M: & & \bullet \ar@{}[]+L(4)*{R(L)} \ar@/^2pc/[0,2]^*-{R(L)} \ar@/_2pc/[0,2]_*-{R(L)}&& \circ \ar@{}[]+R(4)*{R(L)}  \\
X/G: & & \bullet \ar@{}[]+L(4)*{R(K')}  \ar[0,2]^-*{R(J)} && 
\circ \ar@{}[]+R(4)*{R(K)} 
}&
\end{flalign*}

There are canonical isomorphisms $\h_*^G(X)\cong \HH_*(\H(G))_c$ and $\h_*^M(Y)\cong \HH_*(\H(M))_c$: see \cite{Higson-Nistor} and \cite{Schneider}. (Part of the argument is also outlined below.) The action of the Weyl group $W$ on $Y$ and $M$ induces an action on $C_*^M(Y)$ as follows: in degree zero, $w(\rho_0,\rho_1)=(\rho_0^w,\rho_1^w)$. In degree one, $w(\rho_0,\rho_1)=(\rho_1^w,\rho_0^w)$. The induced action on chamber homology agrees, under the embedding $\h_*^M(Y)\into \HH_*(\H(M))$, with the one given by the action of $W$ on $M$ by conjugation.

\subsubsection*{Jacquet functors in chamber homology}

\begin{definition} Let $\pind_c:\h_*^M(Y)\to \h_*^G(X)$ and $\pres_c:\h_*^G(X)\to \h_*^M(Y)$ be the maps induced by restricting the Jacquet functors $\pind_M^G$ and $\pres^G_M$ to the compact part of Hochschild homology. Thus $\pind_c$ and $\pres_c$ are the unique maps making the diagrams
\[
\xymatrix{ \h_*^M(Y) \ar[r]^-{\pind_c} \ar[d]_-{\cong} & \h_*^G(X) \ar[d]^-{\cong}\\
\HH_*(\H(M))_c \ar[r]^-{\pind_M^G} & \HH_*(\H(G))_c } \quad\text{and}\quad 
\xymatrix{ \h_*^G(X) \ar[r]^-{\pres_c} \ar[d]_-{\cong} & \h_*^M(Y) \ar[d]^-{\cong}\\
\HH_*(\H(G))_c \ar[r]^-{\pres^G_M} & \HH_*(\H(M))_c }
\]
commute. 
\end{definition}

Recall that we have defined $\pind_{U,\ol{U}}^K:\Mod(L)\to \Mod(K)$ as the composition $\ind_J^K\pind_{U,\ol{U}}$. We likewise define $\pind_{U,\ol{U}}^{K'}\coloneq \ind_J^{K'}\pind_{U,\ol{U}}$. 

\begin{definition-lemma} \label{RI_definition}
The following diagrams commute, and therefore define maps of complexes $\Pind:C_*^M(Y)\to C_*^G(X)$ and $\Pres:C_*^G(X)\to C_*^M(Y)$ (in that order).
\[ \xymatrix@C=5pt@R=40pt{ (\rho_0,\rho_1) \ar@{|-{>}}[d] & R(L)\oplus R(L) \ar[0,5]^-{\partial_M} \ar[d] &&&&&  R(L)\oplus R(L) \ar[d] &(\rho_0,\rho_1) \ar@{|-{>}}[d] \\
\pind_{U,\ol{U}}\rho_0 + \pind_{U,\ol{U}}\rho_1^w & R(J) \ar[0,5]^-{\partial_G} &&&&& R(K)\oplus R(K') & (\pind_{U,\ol{U}}^K\rho_0,\pind_{U,\ol{U}}^{K'}\rho_1)} \]
\[ \xymatrix@C=5pt@R=40pt{ \pi \ar@{|-{>}}[d] & R(J) \ar[0,5]^-{\partial_G} \ar[d] &&&&&  R(K)\oplus R(K') \ar[d] &(\pi_0,\pi_1) \ar@{|-{>}}[d] \\
\left(\pi^U, (\pi^{\ol{U}})^w\right) & R(L)\oplus R(L) \ar[0,5]^-{\partial_M} &&&&& R(L)\oplus R(L) & (\pi_0^{U}, \pi_1^{V})} \]
\end{definition-lemma} 

\begin{proof}
The first diagram commutes by virtue of the equality $\pind_{U,\ol{U}}^K\rho\cong\pind_{U,\ol{U}}^K\rho^w$ from Proposition \ref{principal_proposition}, along with the analogous equality for $K'$.

In the second diagram we are asserting that for each representation $\pi$ of $J$,
\begin{equation}\label{RI_definition_equation} \left(\ind_J^K\pi\right)^U \cong \pi^{U}\oplus \left(\pi^{\ol{U}}\right)^w
\end{equation}
and similarly for induction to $K'$.
An application of the Mackey formula gives
\[ \left(\ind_J^K\pi\right)^{U}\cong \pi^{U}\oplus \left(\ind_{J\cap J^w}^J \pi^w\right)^{U},\] and a character computation confirms that the second summand is isomorphic to ${(\pi^{\ol{U}})}^w$.
\end{proof}

\begin{theorem}\label{parahoric_chamber_theorem} $\Pind=\pind_c$ and $\Pres=\pres_c$ as maps on chamber homology.
\end{theorem}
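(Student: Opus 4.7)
The plan is to verify both equalities degree-by-degree; since $\h_k^G(X) = \h_k^M(Y) = 0$ for $k \geq 2$, only degrees $0$ and $1$ require attention. Throughout I would use the standard identifications (due to Higson--Nistor and Schneider) of chamber homology with the compact part of Hochschild homology, under which the class of $(\sigma_0, \sigma_1) \in R(K) \oplus R(K')$ in $\h_0^G(X)$ corresponds to the character class of $\ind_K^G \sigma_0 + \ind_{K'}^G \sigma_1$, and analogously for $M$ with $\ind_L^M$.

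\textbf{Degree zero.} Using transitivity of induction, $\Pind(\rho_0, \rho_1)$ corresponds in $\HH_0(\H(G))_c$ to $\ind_J^G \pind_{U,\ol U}(\rho_0 + \rho_1)$. By Dat's compatibility theorem (the commutative square recalled in the Introduction, applied to the Iwahori $J$), this equals $\pind_M^G(\ind_L^M \rho_0 + \ind_L^M \rho_1)$, which is $\pind_c(\rho_0, \rho_1)$. For $\Pres = \pres_c$ in degree zero, I would perform a direct Mackey-type computation: the Iwasawa decomposition $G = PK = PK'$ allows one to identify $\pres^G_M \ind_K^G \pi$ with $\ind_L^M \pi^U$, and likewise $\pres^G_M \ind_{K'}^G \pi$ with $\ind_L^M \pi^V$, which matches the prescription of Definition-Lemma \ref{RI_definition}.

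\textbf{Degree one.} Here $\h_1^M(Y) \cong R(L)$ via $\rho \mapsto (\rho, -\rho)$, and $\Pind(\rho, -\rho) = \pind_{U,\ol U}\rho - \pind_{U,\ol U}\rho^w$ automatically lies in $\ker \partial_G$ by Proposition \ref{principal_proposition}(3). To match this class with $\pind_c(\rho, -\rho)$, I would lift the cycle $(\rho, -\rho)$ to an explicit Hochschild $1$-cycle for $\H(M)$ of the form $e_\rho \otimes t$ (using that $t$ centralises $L$), push this forward via $\pind_M^G$, and trace the image through the inverse chain-level isomorphism to $\h_1^G(X)$. The corresponding verification for $\pres_c$ proceeds analogously, with Lemma \ref{r_asymptotic_lemma} controlling the effect of the Jacquet restriction on degree-one cycles.

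\textbf{Main obstacle.} The hardest step is the degree-one identification: the isomorphism $\HH_1 \cong \h_1$ lacks the transparent compact-induction interpretation available in degree zero. My strategy to contain the combinatorics is to fix once and for all a chain-level realisation of the quasi-isomorphism $C_*^G(X) \simeq C_*(\Mod_f(G))$ (and likewise for $M$) in which $\Pind$ and $\Pres$ become literally the chain maps induced by the Jacquet functors. The remaining verification then reduces to character identities furnished by Proposition \ref{character_proposition} together with the explicit Mackey-type calculations in Section \ref{SL2_section}.
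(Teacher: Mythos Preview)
Your degree-zero arguments are essentially the paper's: for $\Pind$ you invoke Dat's compatibility theorem exactly as the paper does, and for $\Pres$ your Mackey/Iwasawa computation is in the same spirit as the paper's chain-level argument (though the paper carries it out more systematically via the Harish-Chandra transform $\Psi:\H(G_c)\to\H(L)$ and the group-homology identification $\h_*^G(X)\cong\h_*(G,\H(G_c))$, which handles all degrees at once rather than treating degree zero separately).

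The real divergence is in degree one for $\Pind$. You propose to lift $(\rho,-\rho)$ to an explicit Hochschild $1$-cycle, push forward by $\pind_M^G$, and then unwind the chain-level isomorphism back to $\h_1^G(X)$; you yourself flag this last step as the main obstacle, and it is a genuine one---the passage from $\HH_1$ back to chamber $1$-cycles has no transparent formula. The paper avoids this computation entirely by an indirect argument: having already established $\Pres=\pres_c$ (in all degrees, via the chain-level construction above), it proves the chain-level identity $\Pres\Pind=1+w$ on $C_*^M(Y)$ directly from Lemma~\ref{r_Schur_lemma} and \eqref{RI_definition_equation}, and then invokes Bernstein--Zelevinsky's geometric lemma to get $\pres_c\pind_c=1+w$ on homology. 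Combining, $\pres_c\Pind=\pres_c\pind_c$; since $\pres_c$ is injective on $\h_1$ (its image being the $W$-invariants), this forces $\Pind=\pind_c$ in degree one without ever touching an explicit Hochschild cycle.

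So your outline is not wrong, but the degree-one $\Pind$ step as you have sketched it would require substantial additional work that the paper's trick sidesteps. The moral is: prove $\Pres=\pres_c$ first and uniformly, then use it together with the geometric lemma to deduce the degree-one statement for $\Pind$.
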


The proof of Theorem \ref{parahoric_chamber_theorem} occupies most of the remainder of the paper. 

\subsubsection*{Proof that $\Pres=\pres_c$}

An explicit formula for the map $\pres^G_M$ on Hochschild homology is given, for a general reductive group $G$ and Levi subgroup $M$, in \cite{Crisp_compact}. The same map appeared earlier in \cite{Nistor}, where Nistor computes the corresponding map on smooth group homology. Let us recall these results, in summary.

Let $\H(G_c)$ denote the space of locally constant, compactly supported functions on $G_c$, considered as a $G$-module under the adjoint action. As observed in \cite{Higson-Nistor} and \cite{Schneider}, $C_*^G(X)$ is isomorphic to the $G$-coinvariants of the following projective resolution of $\H(G_c)$:
\begin{equation}
\tag*{$C_*(X,G)$:} \bigoplus_{e\in X^1} \H(G_e) \xrightarrow{\partial} \bigoplus_{v\in X^0}\H(G_v)
\end{equation}
(The boundary $\partial$ and the augmentation $C_0(X,G)\to \H(G_c)$ are given by extending functions by zero.)
It follows that $\h_*^G(X)\cong \h_*(G,\H(G_c))$, the right-hand side being smooth group homology (the left-derived functor of $G$-coinvariants on $\Mod(G)$). Blanc and Brylinski show in \cite{Blanc-Brylinski} that there is a canonical isomorphism $\h_*(G,\H(G_c))\cong \HH_*(\H(G))_c$, whence the identification of chamber homology with the compact part of Hochschild homology. Similar considerations apply to $M$: $C_*^M(Y)$ is the complex of $M$-coinvariants of the complex $C_*(Y,M)$ of simplicial chains on $Y$ with coefficients in $\H(L)$, giving $\h_*^M(Y)\cong \h_*(M,\H(L))$ (note that $L=M_c$).

Let $\delta$ be the modular function on $P$, characterised by $\dd (pq) =\delta(q) \dd p$ for any left Haar measure $\dd p$ on $P$. For each $\rho\in\Mod(M)$, $\rho_{\delta^{1/2}}\coloneq \rho\otimes_{\C}\delta^{1/2}$ denotes the twisting of $\rho$ by the one-dimensional representation $\delta^{1/2}$. For each representation $\pi\in\Mod(G)$, the idempotent $\pi(e_K):\pi\to\pi$ descends to a well-defined map $\pi_G\to (\pres^G_M(\pi)_{\delta^{1/2}})_M$ 
between the $G$-coinvariants of $\pi$ and the $M$-coinvariants of $\pres^G_M(\pi)_{\delta^{1/2}}$. (Here one appeals to the Iwasawa decomposition $G=KMN$.) This map is natural in $\pi$, and so it lifts to a natural transformation of derived functors,
\[ \kappa:\h_*(G,\pi)\to \h_*(M,\pres^G_M(\pi)_{\delta^{1/2}}).\]
The ``Harish-Chandra transform''
\[\Psi:\H(G_c)\to \H(L),\qquad \Psi(f)(l)=\int_N f(nl)\, \dd n\]
descends to an $\Ad_M$-equivariant map $\pres^G_M\H(G_c)_{\delta^{1/2}}\to \H(L)$. The Jacquet restriction $\pres_c:\h_*^G(X)\to \h_*^M(Y)$ is then equal to the composition
\[ \h_*^G(X)\xrightarrow{\cong} \h_*(G,\H(G_c))\xrightarrow{\Psi\circ\kappa} \h_*(M,\H(L))\xrightarrow{\cong} \h_*^M(Y).\]
See \cite{Nistor} and \cite{Crisp_compact} for details.

\begin{proof}[Proof that $\Pres=\pres_c$ in Theorem \ref{parahoric_chamber_theorem}]
The inclusion of $Y$ into $X$ gives an isomorphism $Y\cong X/N$. It follows that the image of the resolution $C_*(X,G)$ under the functor $\pres^G_M(\argument)_{\delta^{1/2}}$ is isomorphic to
\begin{equation}\tag*{$C_*(Y,\pres G)$:}
\bigoplus_{e\in Y^1} \H(G_e)_{N_e} \to \bigoplus_{v\in Y^0} \H(G_v)_{N_v},
\end{equation}
the subscripts $N_e$ and $N_v$ denoting coinvariants with respect to the adjoint action. The maps
\[ \Psi_s:\H(G_s)_{N_s}\to \H(L),\qquad \Psi_s(f)(l)=\int_{N_s} f(nl)\, \dd n,\]
where $s$ ranges over the simplices in $Y$, provide a lift of $\Psi$ to a map of resolutions, $C_*(Y,\pres G)\to C_*(Y,M)$. 
We claim that the composition
\begin{equation}\label{r_composition_equation} C_*^G(X)\xrightarrow{\cong} C_*(X,G)_G \xrightarrow{\kappa} C_*(Y,\pres G)_M \xrightarrow{\Psi} C_*(Y,M)_M \xrightarrow{\cong} C_*^M(Y)
\end{equation}
is equal to $\Pres$.

For example, let $\pi$ be a representation of $K$, viewed as a chain in $C_0^G(X)$. The corresponding chain in $C_0(X,G)$ is the function $\ch_\pi\in \H(K)$; recall that $K$ is the isotropy group of a vertex in $X$. This vertex lies in $Y$, and so the map $\kappa$ simply acts on $\ch_\pi$ by averaging over the adjoint action of $K$; $\ch_\pi$ is already $\Ad_K$-invariant, so $\kappa(\ch_\pi)=\ch_\pi\in \H(K)_U$. The map $\Psi:\H(K)_U\to \H(L)$ sends $\ch_\pi$ to $\ch_{\pi^U}$, and so \eqref{r_composition_equation} equals $\Pres$ as maps $R(K)\to R(L)$. The computations for $R(K')$ and $R(J)$ are only slightly more involved (because the cycles in question are not a priori $K$-invariant). We shall not present the details here.  
\end{proof}

\subsubsection*{Proof that $\Pind=\pind_c$}

Unlike the preceding section, whose methods apply to general reductive $G$ and Levi subgroup $M$, our proof that $\Pind=\pind_c$ relies on some special features of $\SL_2$: $\h_*^G(X)$ is nonzero only in degrees zero and one, and $\pres_c:\h_1^G(X)\to \h_1^M(Y)$ is an isomorphism onto the space of $W$-invariants in $\h_1^M(Y)$; see \cite{Nistor} and \cite{Crisp_compact}.

\begin{theorem} {\rm \cite[Theorem 5.2]{Bernstein-Zelevinsky_InducedI}} $\pres_c\pind_c=1+w$ as endomorphisms of $\h_*^M(Y)$.
\end{theorem}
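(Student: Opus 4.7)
The plan is to deduce the identity from the classical geometric (Mackey-type) decomposition of $\pres^G_M\pind_M^G$ by pushing it through Hochschild homology. By construction, $\pres_c$ and $\pind_c$ are the restrictions to the compact parts of the Hochschild-homology maps induced by the functors $\pres^G_M$ and $\pind_M^G$ on finitely generated modules. Under the canonical isomorphism $\h_*^M(Y)\cong \HH_*(\H(M))_c$, the Weyl-group element $w$ acts through the automorphism of $\HH_*(\H(M))$ induced by the conjugation functor $w^*:\rho\mapsto \rho^w$, as recalled at the end of the Background subsection. It therefore suffices to produce the equality $\pres^G_M\pind_M^G = 1 + w^*$ on $\HH_*(\H(M))$ (the restriction to the compact summand being automatic).

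For this I would invoke the cited Bernstein--Zelevinsky geometric lemma, which yields a natural isomorphism of endofunctors
\[ \pres^G_M\pind_M^G \;\cong\; \id_{\Mod(M)} \oplus w^* \]
on $\Mod(M)$. For $\SL_2$ this decomposition is especially transparent: the Bruhat decomposition $G = P \sqcup PwP$ indexes the summands by the two double cosets in $P\backslash G/P$, and since $wMw^{-1}=M$ the cell $PwP$ contributes precisely the twist $w^*$, while the cell $P$ contributes the identity. Both summands preserve finite generation, so the isomorphism descends to the subcategories $\Mod_f(M)$ used to define Hochschild homology via Keller's construction. Functoriality of that construction turns natural isomorphisms of exact functors into equalities of induced maps, and direct sums into sums of induced maps, giving the displayed identity on $\HH_*(\H(M))$. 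Restricting to the compact part and transporting along $\h_*^M(Y) \cong \HH_*(\H(M))_c$ then yields the stated equality.

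The main obstacle is really one of bookkeeping: verifying that the combinatorial action of $w$ described on $C_*^M(Y)$ (identity on factors in degree zero after twisting by $w^*$, swapping the two copies of $R(L)$ in degree one, etc.) coincides, under the identification $\h_*^M(Y)\cong \HH_*(\H(M))_c$, with the automorphism induced by $w^*$. This compatibility has already been recorded in the Background subsection, so the remaining work reduces to citation of the geometric lemma plus the naturality of Keller's Hochschild complex; no further explicit chain-level calculation is required.
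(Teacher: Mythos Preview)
Your overall strategy matches the paper's, but there is a gap in how you invoke the geometric lemma. Bernstein--Zelevinsky do not provide a natural direct-sum decomposition $\pres^G_M\pind_M^G \cong \id \oplus w^*$; what their Theorem~5.2 actually yields is a natural \emph{filtration} of the endofunctor $\pres^G_M\pind_M^G$ with successive quotients $\id$ and $w^*$. The double cosets in $P\backslash G/P$ index the steps of this filtration, not direct summands. In general the filtration does not split functorially---for instance, when $\rho\cong\rho^w$ the Jacquet module of $\pind_M^G\rho$ can be a non-split self-extension of $\rho$---so your step ``direct sums into sums of induced maps'' is not available as written.

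The paper's proof addresses precisely this point: it records the Bernstein--Zelevinsky result as a filtration, and then invokes (citing \cite{Crisp_compact}) the additivity of Hochschild homology, namely that a short exact sequence of exact functors induces on $\HH_*$ the sum of the maps coming from the sub- and quotient-functors. Once you replace your claimed direct-sum isomorphism by the filtration and appeal to this additivity, your argument becomes correct and coincides with the paper's.
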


\begin{proof} The cited result of Bernstein and Zelevinsky implies that the functor $\pres^G_M\pind_M^G$ on $\Mod(M)$ has a natural filtration with quotients $1$ and $w$. This filtration becomes a sum in Hochschild homology \cite{Crisp_compact}.
\end{proof} 

\begin{proposition}\label{RI_proposition} $\Pres\Pind=1+w$ as endomorphisms of $C_*^M(Y)$.
\end{proposition}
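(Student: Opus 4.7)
The plan is to compute $\Pres\Pind$ directly on each of the two nonzero degrees of $C_*^M(Y)$ using the explicit formulas in Definition-Lemma \ref{RI_definition}, and to check that the result agrees with the degree-by-degree action of $1+w$. By additivity of all the functors involved, it suffices to check the identity on pairs $(\rho_0,\rho_1)$ of irreducible representations of $L$.

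The essential input I would establish at the outset is that, for irreducible $\rho\in\Mod(L)$,
\begin{equation*}
(\pind_{U,\ol U}\rho)^U \cong \rho \cong (\pind_{U,\ol U}\rho)^{\ol U}.
\end{equation*}
This follows because $\pind_{U,\ol U}\rho$ is irreducible (Lemma \ref{i_Schur_lemma}) and $\pres_{U,\ol U}\pind_{U,\ol U}\rho \cong \rho \neq 0$ (Proposition \ref{parahoric_basic_proposition}(3)), so Lemma \ref{r_Schur_lemma} gives $\pres_{U}\pi\cong\pres_{\ol U}\pi\cong\rho$ where $\pi = \pind_{U,\ol U}\rho$. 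With this in hand, the degree-$1$ calculation is immediate: applying $\Pres$ to $\Pind_1(\rho_0,\rho_1) = \pind_{U,\ol U}\rho_0 + \pind_{U,\ol U}\rho_1^w$ and using that $w$ squares to an inner automorphism of $L$ yields $(\rho_0 + \rho_1^w,\, \rho_0^w + \rho_1)$, which matches $(1+w)(\rho_0,\rho_1) = (\rho_0+\rho_1^w,\,\rho_1+\rho_0^w)$ in degree $1$. For degree $0$, formula \eqref{RI_definition_equation} with $\pi=\pind_{U,\ol U}\rho_0$ gives
\begin{equation*}
(\pind_{U,\ol U}^K\rho_0)^U \cong (\pind_{U,\ol U}\rho_0)^U \oplus \bigl((\pind_{U,\ol U}\rho_0)^{\ol U}\bigr)^w \cong \rho_0 \oplus \rho_0^w,
\end{equation*}
and the analogous formula for $K'$ (the ``similarly'' clause of Definition-Lemma \ref{RI_definition}) yields $(\pind_{U,\ol U}^{K'}\rho_1)^V \cong \rho_1 \oplus \rho_1^w$. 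Together these match $(1+w)(\rho_0,\rho_1) = (\rho_0+\rho_0^w,\,\rho_1+\rho_1^w)$ in degree $0$.

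The only real obstacle is the bookkeeping for the $K'$ half of degree $0$: I need the analog of \eqref{RI_definition_equation} for induction from $J$ to $K'$ and for $V$-invariants, which should come from the same Mackey-plus-character argument used to prove Definition-Lemma \ref{RI_definition}, with the roles of $U$ and $\ol U$ interchanged and the Weyl element $w$ replaced by the representative intertwining the two edges at the vertex fixed by $K'$. Apart from this symmetry check and the routine verification that $w$ acts as an involution on $R(L)$, the proof is a direct assembly of results already in place from Sections \ref{inflation_section} and \ref{SL2_section}.
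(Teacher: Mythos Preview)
Your proposal is correct and follows essentially the same approach as the paper: both arguments reduce to the identity $(\pind_{U,\ol U}\rho)^U\cong\rho\cong(\pind_{U,\ol U}\rho)^{\ol U}$ (via Lemma~\ref{r_Schur_lemma} and Proposition~\ref{parahoric_basic_proposition}(3)) for degree one, and combine this with \eqref{RI_definition_equation} and its $K'$-analogue for degree zero. Your write-up is in fact slightly more explicit than the paper's in justifying why Lemma~\ref{r_Schur_lemma} applies to $\pind_{U,\ol U}\rho$, and your identification of the $K'$ bookkeeping as the only remaining point matches the paper's ``and the corresponding computation for $K'$''.
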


\begin{proof}
For each irreducible $\rho$ of $L$, one has 
\[ \left(\pind_{U,\ol{U}}^K \rho\right)^U\cong (\pind_{U,\ol{U}}\rho)^U \oplus \left((\pind_{U,\ol{U}}\rho)^{\ol{U}}\right)^w \cong \rho\oplus\rho^w;\]
the first isomorphism is \eqref{RI_definition_equation}, the second follows from Lemma \ref{r_Schur_lemma}. This (and the corresponding computation for $K'$) shows that $\Pres\Pind=1+w$ in degree zero. In degree one, Lemma \ref{r_Schur_lemma} gives $\Pres\Pind=1+w$ immediately.
\end{proof}

\begin{proof}[Proof that $\Pind=\pind_c$ in Theorem \ref{parahoric_chamber_theorem}]
We have shown that $\pres_c \Pind=\Pres\Pind=1+w=\pres_c\pind_c$. Since $\pres_c$ is one-to-one in degree one, this gives $\Pind=\pind_c$ as maps $\h_1^M(Y)\to \h_1^G(X)$.

The equality in degree zero is deduced from a theorem of Dat, as follows. $\HH_0(\H(G))$ is a quotient of the complex vector space $\mathcal V_G$ with basis consisting of pairs $[\sigma,T]$, where $\sigma$ is a finitely generated projective $G$-module, and $T\in \End_G(\sigma)$  (\cite[1.3]{Dat_K0}, \cite[Proposition 2.7]{Crisp_compact}). The inclusion $\h_0^G(X)\into \HH_0(\H(G))$ is then the one induced in homology by
\[ R(K)\to \mathcal V_G,\qquad \pi\mapsto [\ind_K^G\pi,\id]\]
and by the corresponding map $R(K')\to \mathcal V_G$. Similar considerations apply to $M$, and the map $\pind^G_M:\HH_0(\H(M))\to \HH_0(\H(G))$ is the one induced by 
\[ \mathcal V_M\to \mathcal V_G\qquad [\sigma,T]\mapsto[\pind^G_M\sigma,\pind^G_M T].\]
So the theorem in degree zero follows from the assertion that
\[ \pind_M^G \ind_L^M\rho \cong \ind_K^G \pind_{U,\ol{U}}^K\rho,\] naturally with respect to $\rho\in \Mod(L)$, and similarly for $K'$. This assertion is a special case of \cite[(1.4)]{Dat_Finitude}.
\end{proof}

The following description of $\h_1^G(X)$ follows immediately from Theorem \ref{parahoric_chamber_theorem}. Together with Proposition \ref{SL2_iz_proposition}(1), this gives a new proof of \cite[Proposition 9.3]{Baum-Higson-Plymen_SL2}, and also explains the resemblance with principal-series characters observed in \cite[p.17]{Baum-Higson-Plymen_SL2}.

\begin{corollary} $\h_1^G(X)$ has a basis consisting of cycles $\pind_{U,\ol{U}}(\rho)-\pind_{U,\ol{U}}(\rho^w)\in R(J)$, where $\rho$ ranges over a set of representatives for the two-element orbits of $W$ on the set of irreducible representations of $L$.
\end{corollary}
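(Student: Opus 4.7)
The plan is to identify $\h_1^G(X)$ with the $W$-invariants in $\h_1^M(Y)$, using machinery already assembled in this section, and then transport an explicit basis across the identification by means of the formula $\Pind=\pind_c$ from Theorem~\ref{parahoric_chamber_theorem}.

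First I would check that each element $\pind_{U,\ol{U}}\rho-\pind_{U,\ol{U}}\rho^w\in R(J)$ is really a $1$-cycle: this is immediate from Proposition~\ref{principal_proposition}(3), which gives $\pind_{U,\ol{U}}^K\rho\cong\pind_{U,\ol{U}}^K\rho^w$ and the analogous statement for $K'$, so both components of $\partial_G$ vanish on the difference. Next, the argument given for ``$\Pind=\pind_c$'' already invoked that $\pres_c$ identifies $\h_1^G(X)$ with $\h_1^M(Y)^W$, and that $\pres_c\pind_c=1+w$; on $W$-invariants $1+w$ acts as multiplication by $2$, so over $\C$ the restriction of $\pind_c=\Pind$ to $\h_1^M(Y)^W$ is an isomorphism onto $\h_1^G(X)$. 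The task therefore reduces to writing down an explicit basis of $\h_1^M(Y)^W$ and applying $\Pind$ to it.

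A $1$-cycle in $C_*^M(Y)$ is a pair $(\alpha,-\alpha)\in R(L)\oplus R(L)$, and under the identification $\h_1^M(Y)\cong R(L)$ by $(\alpha,-\alpha)\mapsto\alpha$ the degree-one $W$-action $w(\rho_0,\rho_1)=(\rho_1^w,\rho_0^w)$ becomes the involution $\alpha\mapsto -\alpha^w$. Its $+1$-eigenspace thus has basis $\{[\rho]-[\rho^w]\}$ indexed by representatives of the two-element $W$-orbits on irreducibles of $L$ (orbits of size one contribute $[\rho]-[\rho^w]=0$). Applying the degree-one formula for $\Pind$ to the cycle $(\rho-\rho^w,\rho^w-\rho)$ produces $2\bigl(\pind_{U,\ol{U}}\rho-\pind_{U,\ol{U}}\rho^w\bigr)$, and rescaling by $\tfrac12$ yields the claimed basis of $\h_1^G(X)$.

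There is no real obstacle; the whole argument is bookkeeping once Theorem~\ref{parahoric_chamber_theorem} is available. The only subtlety worth flagging is the sign in the induced $W$-action on $\h_1^M(Y)$ --- it is the \emph{degree-one} $W$-action which exchanges the two summands of $R(L)\oplus R(L)$, and this is what forces the involution on cycles to carry the minus sign, and ultimately dictates the appearance of a difference $\pind_{U,\ol{U}}\rho-\pind_{U,\ol{U}}\rho^w$ rather than a sum in the final basis.
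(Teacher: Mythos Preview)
Your proposal is correct and follows essentially the same route as the paper: both use the fact that $\pres_c$ is an isomorphism of $\h_1^G(X)$ onto $\h_1^M(Y)^W$, write down the basis $c_\rho=(\rho-\rho^w,\rho^w-\rho)$ of the latter, and invoke $\Pres\Pind=1+w$. The only cosmetic difference is direction: you apply $\Pind$ to the $c_\rho$ and rescale, whereas the paper applies $\Pres$ to the proposed cycles and checks that the answer is $c_\rho$ via $\Pres(\pind_{U,\ol{U}}\rho-\pind_{U,\ol{U}}\rho^w)=\tfrac{1}{2}\Pres\Pind(c_\rho)=c_\rho$.
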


\begin{proof} The map $\pres_c:\h_1^G(X)\to \h_1^M(Y)$ is injective, with range equal to the space of $W$-invariants in $\h_1^M(Y)$. The cycles $c_\rho\coloneq (\rho-\rho^w,\rho^w-\rho)\in C_1^M(Y)$, for $\rho$ as in the statement of the corollary, constitute a basis for the latter space, and Proposition \ref{RI_proposition} shows that
\[ R(\pind_{U,\ol{U}}(\rho)-\pind_{U,\ol{U}}(\rho^w))=\frac{1}{2}RI(c_\rho)=c_\rho.\qedhere\]
\end{proof}

\subsubsection*{The case of $\SL_n$}

The definitions of $\Pres$ and $\Pind$ make sense also for $G=\SL_n(F)$, $M$ the diagonal subgroup. For example, in degree $n-1$ one sets
\[ \Pind:R(L)^n\to R(J),\qquad (\rho_0,\ldots,\rho_{n-1})\mapsto \sum_{w_i\in W} \pind_{U,\ol{U}}\rho_i^{w_i},\] where $J=UL\ol{U}\subset G$ is the standard Iwahori subgroup, and $W=N_G(M)/M$ is the Weyl group, which acts simply transitively on the set of chambers in a fundamental domain for the action of $M$ on its apartment. In degree zero, 
\[ \Pind:R(L)^n\to \bigoplus_{i=0}^{n-1}R(K_i),\qquad (\rho_0,\ldots,\rho_{n-1})\mapsto (\pind_{U,\ol{U}}^{K_0}\rho_0,\ldots,\pind_{U,\ol{U}}^{K_{n-1}}\rho_{n-1}),\] where $K_0,\ldots,K_{n-1}$ are the isotropy groups of the vertices of the chamber stabilised by $J$, and $\pind_{U,\ol{U}}^{K_i}=\ind_J^{K_i}\pind_{U,\ol{U}}$. The above proof carries over to give the following partial result:

\begin{proposition} Let $G=\SL_n(F)$, and let $M\subset G$ be the diagonal subgroup. Define maps $\Pres:C_*^G(X)\to C_*^M(Y)$ and $\Pind:C_*^M(Y)\to C_*^G(X)$ as above. Then $\Pres=\pres_c$ as maps $\h_*^G(X)\to \h_*^M(Y)$, and $\Pind=\pind_c$ as maps  $\h_0^M(Y)\to \h_0^G(X)$ and $\h_{n-1}^M(Y)\to \h_{n-1}^G(X)$.\hfill\qed
\end{proposition}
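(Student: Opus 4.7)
The plan is to adapt the three-part proof of Theorem \ref{parahoric_chamber_theorem} to $\SL_n$, one regime at a time. For $\Pres=\pres_c$ on all of $\h_*^G(X)$, I expect the $\SL_2$ argument to carry over essentially unchanged. The complex $C_*(X,G)$ remains a projective resolution of $\H(G_c)$ for $\SL_n$ by the results of Higson--Nistor and Schneider, and the parahoric stabilisers $G_s$ of simplices $s\subset Y$ admit Iwahori decompositions $G_s=U_sL\ol{U}_s$ compatible with the fixed parabolic $P=LN$, where $U_s=G_s\cap N$ and $\ol{U}_s=G_s\cap\ol{N}$. Applying $\pres^G_M(\,\cdot\,)_{\delta^{1/2}}$ produces $C_*(Y,\pres G)$, and the fibrewise Harish-Chandra transforms $\Psi_s$ lift $\Psi$ to a morphism into $C_*(Y,M)$; a Mackey calculation gives $\Psi_s(\ch_\pi)=\ch_{\pi^{U_s}}$ for each representation $\pi$ of $G_s$, matching the simplex-by-simplex definition of $\Pres$ and identifying the two chain maps.

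For $\Pind=\pind_c$ in degree zero, the $\SL_2$ argument applies verbatim: using the Dat--Keller presentation of $\HH_0(\H(G))$ as a quotient of pairs $[\sigma,T]$, Dat's transitivity \cite[(1.4)]{Dat_Finitude} gives the natural isomorphism $\pind_M^G\ind_L^M\rho\cong \ind_{K_i}^G\pind_{U,\ol{U}}^{K_i}\rho$ for each vertex-type stabiliser $K_i$, which is precisely the equality of the two maps on degree-zero cycles.

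For $\Pind=\pind_c$ in top degree $n-1$, the plan is to mimic the $\SL_2$ strategy. First I would establish the chain-level identity $\Pres\,\Pind=\sum_{w\in W}w$ on the top-degree part of $C_*^M(Y)$: since $W$ acts simply transitively on the chambers of a fundamental domain for $M$ in $Y$, this decomposes chamber by chamber, and via Lemma \ref{Mackey_lemma} together with Proposition \ref{parahoric_basic_proposition}(3) each chamber contributes a single term of the form $\pres_{U,\ol{U}}\pind_{U,\ol{U}}\rho^v\cong\rho^v$, summing over $W$ as required. Combined with the Bernstein--Zelevinsky identity $\pres_c\pind_c=\sum_{w\in W}w$ this yields $\pres_c\Pind=\pres_c\pind_c$ in degree $n-1$, and the conclusion follows provided $\pres_c$ is injective on $\h_{n-1}^G(X)$ with image the $W$-invariants.

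The main obstacle is this last injectivity statement. For $\SL_2$ it was quoted from \cite{Nistor,Crisp_compact}; for $\SL_n$ the analogous assertion that top-degree chamber cycles are detected by their restrictions to an apartment requires an independent structural analysis of $\HH_{n-1}(\H(G))_c$ that is not produced by the methods of this paper. The Mackey bookkeeping in the identity $\Pres\,\Pind=\sum_{w}w$ is also somewhat more intricate than in $\SL_2$, since one must correctly track the $W$-action on the several copies of $R(L)$ making up the top-degree chains over a fundamental domain.
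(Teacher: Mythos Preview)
Your proposal is correct and follows exactly the approach the paper intends: the paper gives no separate proof of this proposition, merely asserting that ``the above proof carries over'' before stating it with a \qed. Your three-part breakdown (the $\Pres$ computation via the Harish--Chandra transform on the resolution $C_*(Y,\pres G)$, the degree-zero $\Pind$ argument via Dat's transitivity, and the top-degree $\Pind$ argument via $\Pres\Pind=\sum_{w\in W}w$ combined with injectivity of $\pres_c$) is precisely the intended generalisation.

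Regarding your stated obstacle: you are right that the injectivity of $\pres_c$ in top degree is not proved within the paper, but note that even in the $\SL_2$ case it was imported from \cite{Nistor} and \cite{Crisp_compact} rather than established internally. The paper is implicitly relying on the same external references for the analogous statement in degree $n-1$ for $\SL_n$; this is not a gap in your argument so much as a dependency that the paper itself silently carries.
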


Replacing the diagonal subgroup by a larger Levi subgroup, for example the $(2\times 1)$-block-diagonal subgroup of $\SL_3(F)$,
one can still use parahoric induction to define a candidate for the map $\Pind$. It follows from our joint work (in progress) with Ehud Meir and Uri Onn that this map will no longer commute with the boundary maps; the issue is closely related to Dat's question \cite[Question 2.14]{Dat_Finitude}. It is likely that new tools will be needed in this situation.

\bibliography{parahoric_SL2}{}
\bibliographystyle{plain}

\end{document}